\newtheorem{lem}{Lemma}[section]
\newtheorem{prop}[lem]{Proposition}
\newtheorem{coro}[lem]{Corollary}
\newtheorem{thm}[lem]{Theorem}
\newtheorem{de}[lem]{Definition}
\newtheorem{pozn}[lem]{Remark}
\newtheorem{ex}[lem]{Example}
\newtheorem{observation}[lem]{Observation}
\def\pf{\begin{proof}}
\def\pfk{\end{proof}}
\crefname{thm}{theorem}{Theorems}
\crefname{thrm}{theorem}{Theorems}
\crefname{coro}{corollary}{Corollaries}
\crefname{ex}{example}{Examples}
\crefname{lem}{lemma}{lemmas}
\crefname{lmm}{lemma}{lemmas}
\crefname{claim}{claim}{Claims}
\crefname{klejm}{claim}{Claims}
\crefname{obs}{observation}{Observations}
\crefname{proposition}{proposition}{Propositions}
\crefname{prop}{proposition}{Propositions}
\crefname{de}{definition}{Definitions}
\crefname{pozn}{remark}{remarks}
\begin{document}
\title{Infinite words with finite defect  }

\author[Balkov\'a]{L{\!'}ubom\'{\i}ra~Balkov\'a}
\address[Balkov\'a]{Department of Mathematics, FNSPE Czech Technical University in
Prague, Trojanova 13, 120 00 Praha 2, Czech Republic}
\email{lubomira.balkova@fjfi.cvut.cz}

\author[Pelantov\'a]{Edita~Pelantov\'a}
\address[Pelantov\'a]{Department of Mathematics, FNSPE Czech Technical University in
Prague, Trojanova 13, 120 00 Praha 2, Czech Republic}
\email{edita.pelantova@fjfi.cvut.cz}

\author[Starosta]{\v St\v ep\'an Starosta}
\address[Starosta]{Department of Mathematics, FNSPE Czech Technical University in
Prague, Trojanova 13, 120 00 Praha 2, Czech Republic}
\email{staroste@fjfi.cvut.cz}
\keywords{defect, rich words, palindrome, palindromic complexity, factor complexity, morphisms of class $P$}
\subjclass[2010]{68R15}
\date{\today}

\begin{abstract}
In this paper, we provide a~new characterization of uniformly
recurrent words with finite defect based on a~relation between the
palindromic and factor complexity. Furthermore, we introduce a~class of morphisms $P_{ret}$ closed under composition and we show that a~uniformly recurrent word
with finite defect is an image of a~rich (also called full) word under a morphism
of class $P_{ret}$. This class is closely related to the well-known class $P$ defined by Hof, Knill,
and Simon; every morphism from $P_{ret}$ is conjugate to a~morphism of class $P$.
\end{abstract}
\maketitle

\section{Introduction}

The upper bound $|w|+1 $ on the number of palindromes occurring in
a finite word $w$ given by X.~Droubay, J.~Justin,  and  G.~Pirillo
in \cite{DrJuPi} initiated  many interesting investigations on
palindromes in  infinite words as well. An infinite word for which
the upper bound is attained for any of its factors is called rich
or full. There exist several characterizations of rich words based
on the notion of complete return words~\cite{GlJuWiZa}, on the
longest palindromic suffix and prefix of a factor~\cite{DrJuPi,
BuLuGlZa2}, on the palindromic and factor
complexity~\cite{BuLuGlZa} and most recently on the bilateral
orders of factors~\cite{BaPeSta}. Brlek et al. suggested in
\cite{BrHaNiRe} to study the defect of a finite word $w$ defined
as the difference between the upper bound $|w|+1 $ and the actual
number of palindromes contained in $w$. The defect of an infinite
word is then defined as the maximal defect of a factor of the
infinite word. In this convention, rich words are precisely the
words with zero defect. In this paper we focus on uniformly
recurrent words with finite defect. Let us point out that periodic
words with finite defect have been already described
in~\cite{BrHaNiRe} and in~\cite{GlJuWiZa}. In Section 2 we
introduce notation and summarize known results on rich words and
words with finite defect. In Section 3 the notion of oddities and
the characterization of uniformly recurrent words with finite
defect based on oddities from~\cite{GlJuWiZa} is recalled and, as
an immediate consequence, two more useful characterizations are
deduced. The main result is a~new characterization of uniformly
recurrent words with finite defect based on a~relation between the
palindromic and factor complexity, see Theorem~\ref{PALaDefect} in
Section 4. Furthermore, we introduce a~class of morphisms $P_{ret}$ closed under composition of morphisms and we show that a~uniformly recurrent word
with finite defect is an image of a~rich word under a morphism
of class $P_{ret}$, see Theorem~\ref{obraz} in Section
5. This class is closely related to the well-known class $P$ defined by Hof, Knill, and Simon in~\cite{HoKnSi}; every morphism from $P_{ret}$ is conjugate to a~morphism of class $P$.

\section{Preliminaries}

By $\mathcal{A}$ we denote a~finite set of symbols, usually called
{\em letters}; the set $\mathcal{A}$ is therefore called an {\em
alphabet}. A finite string $w=w_0w_1\ldots w_{n-1}$ of letters of
$\mathcal{A}$ is said to be a~{\em finite word}, its length is
denoted by $|w| = n$. Finite words over  $\mathcal{A}$ together with
the operation of concatenation and the empty word $\epsilon$ as the
neutral element form a~free monoid $\mathcal{A}^*$. The map
$$w=w_0w_1\ldots w_{n-1} \quad \mapsto \quad \overline{w} =
w_{n-1}w_{n-2}\ldots w_{0}$$ is a bijection on $\mathcal{A}^*$, the
word $\overline{w}$ is called the {\em reversal} or the {\em mirror
image} of $w$. A~word $w$ which coincides with its mirror image is
a~{\em palindrome}.

Under an {\em infinite word} we understand an infinite string
${\mathbf u}=u_0u_1u_2\ldots $ of letters from $\mathcal{A}$.  A~finite word
$w$ is a~{\em factor} of a~word $v$ (finite or infinite) if there
exist words $p$ and $s$ such that $v= pws$.
If $p = \epsilon$, then $w$ is said to be a~{\em prefix} of
$v$, if $s = \epsilon$, then $w$ is a~{\em suffix} of~$v$.

The {\em language} $\mathcal{L}({\mathbf u})$ of an infinite word ${\mathbf u}$ is the
set of all its factors. Factors of $\mathbf u$ of length $n$ form the set
denoted by $\mathcal{L}_n({\mathbf u})$. Clearly, $\mathcal{L}({\mathbf u})=\cup_{n\in
\mathbb{N}}\mathcal{L}_n({\mathbf u})$. {We say that the language
$\mathcal{L}({\mathbf u})$ is {\em closed under reversal} if $\mathcal{L}({\mathbf u})$
contains with every factor $w$ also its reversal $\overline{w}$.}

For any factor $w\in \mathcal{L}({\mathbf u})$, there exists an index $i$
such that $w$ is a prefix  of  the infinite word
$u_iu_{i+1}u_{i+2} \ldots$. Such an index is called an {\em
occurrence} of $w$ in ${\mathbf u}$. If each factor of $\mathbf u$ has infinitely many
occurrences in ${\mathbf u}$, the infinite word $\mathbf u$ is said to be {\em
recurrent}. It is easy to see that if the language of ${\mathbf u}$ is
closed under reversal, then ${\mathbf u}$ is recurrent (a~proof can be found in~\cite{GlJuWiZa}). For a~recurrent
infinite word ${\mathbf u}$, we may define the notion of a~{\em complete
return word} of any $w \in\mathcal{L}({\mathbf u})$. It is a~factor $v\in
\mathcal{L}({\mathbf u})$ such that $w$ is a prefix and a suffix of $v$ and
$w$ occurs in $v$ exactly twice. Under a~{\em
return word} of a factor $w$ is usually meant
a word $q \in \mathcal{L}({\mathbf u})$  such that $qw$ is a complete return word of $w$.
If any factor $w \in \mathcal{L}({\mathbf u})$ has only finitely many return words, then
the infinite word ${\mathbf u}$ is called {\em uniformly recurrent}. If ${\mathbf u}$ is a~uniformly
recurrent word, we can assign to any $n \in \mathbb{N}$ the minimal number
$R_{{\mathbf u}}(n) \in  \mathbb{N}$ such that we have for any $v  \in
\mathcal{L}({\mathbf u}) \ \hbox {with} \ |v| \geq  R_{{\mathbf
u}}(n)$
$$  \{ w \mid |w| = n,  \  w \ \hbox{ is a factor of } \ v\}  =
\mathcal{L}_n({\mathbf u}),
$$
or equivalently, any piece of ${\mathbf u}$ which is longer than or equal to
$R_{{\mathbf u}}(n)$ contains already all factors of ${\mathbf u}$ of
length $n$.  The map $n \to R_{{\mathbf u}}(n)$ is usually
called the {\em recurrence function} of ${\mathbf u}$.
In particular, any fixed point of a~primitive morphism is uniformly recurrent,
where a~morphism $\varphi$ over an alphabet $\mathcal A$ is {\em primitive}
if there exists an integer $k$ such that for every $a \in \mathcal A$
the $k$-th iteration $\varphi^k(a)$ contains all letters of $\mathcal A$.

The {\em factor complexity} of an infinite word ${\mathbf u}$ is a~map
$\mathcal{C}: \mathbb{N} \mapsto \mathbb{N}$ defined by the
prescription $\mathcal{C}(n):=\# \mathcal{L}_n({\mathbf u})$. To determine the
first difference of the factor complexity,  one has to count the possible
{extensions} of factors of length $n$. A~{\em right extension} of $w
\in \mathcal{L}({\mathbf u})$ is any letter $a\in \mathcal{A}$ such that $w
a\in \mathcal{L}({\mathbf u})$. Of course, any factor of ${\mathbf u}$ has at least one
right extension. A~factor $w$ is called {\em right special} if $w$
has at least two right extensions. Similarly, one can define a~{\em
left extension} and a~{\em left special} factor. We will deal only
with recurrent infinite words ${\mathbf u}$. In this case, any factor of $\mathbf u$
has at least one left extension. We say that $w$ is a~{\em
bispecial} factor if it is right and left special.

In our article we focus on words in some sense opulent in palindromes,
therefore we  will introduce several notions connected with palindromic factors.

\medskip

The {{\em defect} $D(w)$ of a~finite word $w$ is the difference between
the utmost number of palindromes $|w|+1$ and the actual number of
palindromes contained in $w$. Finite words with zero defects -- called {\em rich} words -- can be viewed as the most saturated by
palindromes. This definition may be extended to infinite words as
follows.}

\begin{de} An infinite word ${\mathbf u} = u_0u_1u_2 \ldots$ is called
{\it rich}, if for any index $n\in \mathbb{N}$  the prefix
$u_0u_1u_2 \ldots u_{n-1}$ of length $n$ contains exactly $n+1$
different palindromes.
\end{de}
\begin{pozn}
We keep here the terminology introduced by Glen et al. in~\cite{GlJuWiZa} in 2007, which seems to us to be prevalent nowadays. However, Brlek et al.
in~\cite{BrHaNiRe} baptized such words full already in 2004.
\end{pozn}
Let us remark that not only all prefixes of rich words are rich, but also all factors
are rich. A result  from~\cite{DrJuPi} will provide us
with a~handful tool which helps to evaluate the defect of a factor.

\begin{prop}[\cite{DrJuPi}]\label{lps} A finite or infinite word $\mathbf u$ is rich if and only if
the longest palindromic suffix of $w$ occurs
exactly once in $w$ for any prefix $w$ of $\mathbf u$.
\end{prop}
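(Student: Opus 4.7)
The plan is to analyze how the number of distinct palindromic factors changes under one-letter extensions, and then telescope along the prefixes of $\mathbf u$. The crucial ingredient is the following observation: for any finite word $w$ and letter $a$, every palindromic suffix $v$ of $wa$ that is strictly shorter than the longest palindromic suffix $p$ of $wa$ already occurs inside $w$. Indeed, being a suffix of the palindrome $p$, the word $v$ is also a prefix of $p$; this prefix-occurrence of $v$ inside $wa$ ends at position $|wa|-|p|+|v|-1<|wa|-1$, hence lies entirely inside $w$.

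From this I deduce that the set of palindromic factors of $wa$ which are not factors of $w$ is either empty or equal to $\{p\}$. Denoting by $P(w)$ the number of distinct palindromic factors of $w$ (with the empty word counted as a palindrome, so $P(\epsilon)=1$), I conclude that $P(wa)\in\{P(w),P(w)+1\}$, with $P(wa)=P(w)+1$ precisely when $p$ does not occur in $w$, equivalently when the longest palindromic suffix of $wa$ occurs exactly once in $wa$.

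The proposition now follows by iterating along the prefixes $w_0=\epsilon, w_1, w_2,\ldots$ of $\mathbf u$. The iteration gives $P(w_n)\leq n+1$ for every $n$, with equality throughout if and only if each one-letter extension contributes a new palindrome, that is, if and only if the longest palindromic suffix of every nonempty prefix of $\mathbf u$ is unioccurrent in that prefix. Since by definition $\mathbf u$ is rich exactly when $P(w_n)=n+1$ for all $n$, both directions of the equivalence are established simultaneously.

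The only real obstacle is the palindromic-suffix lemma of the first paragraph; once it is in place the remainder is straightforward bookkeeping. I would also be careful with the conventions on the empty word (so that the count aligns with the bound $|w|+1$) and with the trivial base case $w=\epsilon$, where $wa=a$ has longest palindromic suffix $a$, which indeed occurs exactly once.
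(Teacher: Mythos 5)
Your proposal is correct, and it is essentially the classical argument of Droubay, Justin, and Pirillo that the paper cites for this proposition without reproducing a proof: one shows that each one-letter extension $wa$ adds at most one new palindrome, namely the longest palindromic suffix of $wa$, and that it is new precisely when it is unioccurrent in $wa$, then telescopes over prefixes. The key lemma (shorter palindromic suffixes of $wa$ already occur inside $w$ via their prefix-occurrence in $lps(wa)$) and the bookkeeping, including the observation that equality at the end forces equality at every step, are exactly as in the standard proof, so nothing further is needed.
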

The longest palindromic suffix of a factor $w$ will occur often
 in our considerations, therefore we will denote it
 by $lps(w)$. In accordance  with the terminology introduced in
\cite{DrJuPi}, the factor with a~unique occurrence in another factor
is called {\em unioccurrent}. From the proof of the previous proposition
directly follows the next corollary.

\begin{coro}\label{kolikunioccurent} The defect $D(w)$ of a~finite word $w$ is equal to the
number of prefixes $w'$ of $w$, for which the longest palindromic
suffix of $w'$ is not unioccurrent in $w'$.
\end{coro}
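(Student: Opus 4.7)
The plan is to measure the defect by tracking, letter by letter, how many palindromic factors are gained as we scan the prefixes of $w$ from the empty prefix up to $w$ itself. The central ingredient is the standard fact underlying Proposition~\ref{lps}: appending a single letter to a word $v$ enlarges the set of palindromic factors by at most one, the possible new palindrome being precisely $lps(va)$; moreover, this new palindrome is genuinely new exactly when $lps(va)$ is unioccurrent in $va$, i.e., when it does not already occur in $v$. This is exactly what one extracts from the proof of Proposition~\ref{lps}, so I intend to invoke it rather than reprove it.

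Writing $p_k$ for the prefix of $w$ of length $k$ and $P(v)$ for the set of palindromic factors of $v$, the above fact says that $|P(p_k)| - |P(p_{k-1})| = 1$ if $lps(p_k)$ is unioccurrent in $p_k$ and $0$ otherwise. Since $|P(p_0)| = 1$ (only the empty palindrome), telescoping yields
\[
|P(w)| \;=\; 1 + \#\{\, k \in \{1,\dots,|w|\} : lps(p_k) \text{ is unioccurrent in } p_k \,\}.
\]

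Subtracting from the upper bound $|w|+1$ then gives
\[
D(w) \;=\; (|w|+1) - |P(w)| \;=\; |w| - \#\{\, k : lps(p_k) \text{ unioccurrent in } p_k \,\},
\]
which is precisely the number of nonempty prefixes $w'$ of $w$ for which $lps(w')$ fails to be unioccurrent in $w'$; the empty prefix contributes to the unioccurrent side trivially and is therefore irrelevant. The only obstacle is the auxiliary observation that at most one new palindrome appears per appended letter -- a short combinatorial argument already established in the proof of Proposition~\ref{lps} -- so no separate difficulty arises, and the corollary truly drops out by bookkeeping.
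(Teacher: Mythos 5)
Your proof is correct and is essentially the paper's own argument: the corollary is stated there as a direct consequence of the proof of Proposition~\ref{lps}, namely that appending a letter adds a new palindrome (necessarily the longest palindromic suffix) exactly when that suffix is unioccurrent, and telescoping over prefixes gives the count. The bookkeeping with the empty prefix and the subtraction from $|w|+1$ is handled correctly, so nothing further is needed.
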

This corollary implies that $D(v) \geq D(w)$ whenever $w$ is
a~factor of $v$. It enables to give a~reasonable definition of
the defect of an infinite word (see~\cite{BrHaNiRe}).

\begin{de}  The defect of  an  infinite word  $\mathbf{u}$ is the  number (finite or infinite)
$$D({\mathbf{u}}) = \sup \{ D(w) \bigm | w \ \text{is a~prefix of $\mathbf u$}\}\,.$$
\end{de}
Let us point out several facts concerning defects that are easy to prove:
\begin{enumerate}
\item
If we consider all factors of a~finite or an infinite word $\mathbf u$,
we obtain the same defect, i.e.,
$$D({\mathbf{u}}) = \sup \{ D(w) \bigm | w \in \mathcal{L}(\mathbf{u})\} \,.$$
\item Any infinite word with finite defect contains infinitely many palindromes.
\item
Infinite words with zero defect correspond exactly to rich words.
\end{enumerate}
Periodic words with finite defect have been studied in~\cite{BrHaNiRe} and in~\cite{GlJuWiZa}.
It holds that the defect of an infinite periodic word with the minimal period $w$ is finite if and only if $w=pq$, where both $p$ and $q$ are palindromes.
In \cite{GlJuWiZa} words with finite defect have been baptized {\em almost rich} and the richness of a~word was described using complete return words.
\begin{prop}[\cite{GlJuWiZa}]\label{returnrich} An infinite word ${\mathbf u}$ is rich if and only if
all complete return words of any palindrome are palindromes.
\end{prop}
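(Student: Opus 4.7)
The plan is to prove both implications using \cref{lps} together with the remark that every factor of a rich word is rich. Throughout, if $v$ is any complete return word of a palindrome $p$, then by definition $p$ is a prefix and a suffix of $v$, and the only two occurrences of $p$ in $v$ are at positions $0$ and $|v|-|p|$.

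For the direct implication, assume $\mathbf u$ is rich and let $v$ be a complete return word of a palindrome $p \in \mathcal L(\mathbf u)$. Since $v$ is a factor of the rich word $\mathbf u$, $v$ is itself rich, so by \cref{lps} the factor $lps(v)$ is unioccurrent in $v$. But $p$ is a palindromic suffix of $v$ that occurs twice, so $p\neq lps(v)$, which forces $|lps(v)| > |p|$. Now $p$ is a suffix of $lps(v)$, and since $lps(v)$ is a palindrome, $\overline p = p$ is also a prefix of $lps(v)$. Hence the occurrence of $lps(v)$ as a suffix of $v$ produces an occurrence of $p$ in $v$ at position $|v|-|lps(v)|$. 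Because $p$ has exactly two occurrences in $v$, namely at positions $0$ and $|v|-|p|$, and the value $|v|-|lps(v)|$ cannot equal $|v|-|p|$ (as $|lps(v)|>|p|$), it must equal $0$. Thus $|lps(v)|=|v|$, i.e.\ $v$ is a palindrome.

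For the converse, suppose every complete return word of every palindromic factor of $\mathbf u$ is a palindrome. By \cref{lps} it suffices to show that $lps(w)$ is unioccurrent in $w$ for every prefix $w$ of $\mathbf u$. Assume for contradiction that some prefix $w$ admits $p := lps(w)$ with at least two occurrences in $w$. Let $i$ be the starting position of the penultimate occurrence of $p$ in $w$, so that $i < |w|-|p|$, and set $v := w[i\,..\,|w|-1]$. Then $p$ is a prefix and a suffix of $v$, and by the choice of $i$ it occurs in $v$ exactly twice, so $v$ is a complete return word of the palindrome $p$. By hypothesis $v$ is a palindrome, and $|v|=|w|-i>|p|$; but then $v$ is a palindromic suffix of $w$ strictly longer than $lps(w)=p$, a contradiction.

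The main subtlety is the forward direction, where one must exploit both the palindromicity of $lps(v)$ (to transfer the suffix occurrence of $p$ into a prefix occurrence) and the ``exactly twice'' condition of a complete return word (to pin down the position). The converse is essentially a contrapositive packaging argument that turns a failure of unioccurrence into a non-palindromic complete return word.
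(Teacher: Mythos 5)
Your proof is correct. Note that the paper does not prove this statement at all -- it is quoted from \cite{GlJuWiZa} -- so there is no internal argument to compare against; what you give is a valid self-contained derivation from Proposition~\ref{lps} together with the fact that factors of rich words are rich, and it is essentially the standard argument. Both directions are sound: in the forward direction the key chain (two occurrences of $p$ force $|lps(v)|>|p|$, palindromicity of $lps(v)$ transfers the suffix occurrence of $p$ to position $|v|-|lps(v)|$, and the ``exactly twice'' condition pins that position to $0$) is exactly right, and in the converse the penultimate occurrence of $lps(w)$ correctly manufactures a complete return word of $p$ inside $w$, whose palindromicity would contradict maximality of $lps(w)$. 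The only degenerate point, which is harmless, is the empty palindrome: its ``complete return words'' are single letters, so both directions hold trivially there, and your argument implicitly concerns nonempty $p$. The same style of reasoning (unioccurrence of the longest palindromic suffix forcing palindromic complete returns) is what the paper itself uses later in Lemma~\ref{alternace}, so your route is fully consistent with the paper's toolkit.
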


The authors of~\cite{DrJuPi} who were the first ones to tackle
this problem showed that Sturmian and episturmian words are rich.
In~\cite{BrHaNiRe}, an insight into the richness of periodic words
can be found.

\medskip

The number of  palindromes of a~fixed length occurring in an
infinite word is measured by the so called {\it palindromic
complexity} ${\mathcal{P}}$, a~map which assigns to any non-negative
integer $n$ the number
$$ {\mathcal{P}}(n) := \#\{ w \in {\mathcal{L}_n}(u)\mid w \ \
\hbox{is a palindrome}\}\,.$$
The palindromic complexity is bounded
by the first difference of factor complexity. The following proposition is proven in
\cite{BaMaPe} for uniformly recurrent words, however the uniform recurrence is not needed in the proofs,
thus it holds for any infinite words with language closed under reversal.

\begin{prop}[\cite{BaMaPe}]\label{BaMaPe}
 Let $\mathbf u$ be an infinite word
with language closed under reversal. Then
\begin{equation}\label{nerovnost}
{\mathcal {P}}(n) + {\mathcal {P}}(n+1) \leq {\mathcal {C}}(n+1) -
{\mathcal {C}}(n) +2\,,
\end{equation}
for all $n\in \mathbb{N}$.
\end{prop}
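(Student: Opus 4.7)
The plan is to reduce the inequality to a first-Betti-number estimate on the Rauzy graph of ${\mathbf u}$ quotiented by the reversal involution. I would consider the Rauzy graph $\Gamma_n$ whose vertex set is $\mathcal{L}_n({\mathbf u})$, of size $\mathcal{C}(n)$, and whose directed edge set is $\mathcal{L}_{n+1}({\mathbf u})$, of size $\mathcal{C}(n+1)$, where each factor $w = w_0 \ldots w_n$ of length $n+1$ is the edge from $w_0 \ldots w_{n-1}$ to $w_1 \ldots w_n$. Closure of $\mathcal{L}({\mathbf u})$ under reversal makes $\sigma\colon v \mapsto \overline{v}$ a well-defined involution on both the vertex and edge sets, and its fixed points are precisely the palindromic factors. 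Passing to the undirected quotient multigraph $G := \Gamma_n/\sigma$ therefore gives
$$|V(G)| = \frac{\mathcal{C}(n)+\mathcal{P}(n)}{2}, \qquad |E(G)| = \frac{\mathcal{C}(n+1)+\mathcal{P}(n+1)}{2}.$$
The excerpt already notes that closure of $\mathcal{L}({\mathbf u})$ under reversal forces ${\mathbf u}$ to be recurrent, which in turn makes $\Gamma_n$ strongly connected; hence $G$ is connected as well.

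The key observation is that every palindromic edge of $\Gamma_n$ descends to a loop in $G$: if $w \in \mathcal{L}_{n+1}({\mathbf u})$ satisfies $w = \overline{w}$, then $w_0 \ldots w_{n-1} = \overline{w_1 \ldots w_n}$, so the length-$n$ prefix and suffix of $w$ lie in the same $\sigma$-orbit and therefore become a single vertex of $G$. Consequently $G$ contains at least $\mathcal{P}(n+1)$ loops.

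To finish, I would invoke the standard fact that the first Betti number of a connected multigraph equals $|E| - |V| + 1$ and is bounded below by the number of loops (each loop being an independent element of the cycle space). Substituting the above formulas for $|V(G)|$ and $|E(G)|$ yields
$$\mathcal{P}(n+1) \;\leq\; \frac{\mathcal{C}(n+1)+\mathcal{P}(n+1)}{2} - \frac{\mathcal{C}(n)+\mathcal{P}(n)}{2} + 1,$$
which rearranges to exactly the inequality~\eqref{nerovnost}. The only non-routine step is recognizing that quotienting by reversal is the right move: it converts palindromic edges, which are precisely what $\mathcal{P}(n+1)$ counts, into loops and thereby pairs them with the combinatorial quantity $|E|-|V|$. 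The rest -- orbit counting, connectivity of the quotient, and linear independence of loops in the cycle space -- is entirely standard.
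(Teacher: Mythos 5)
Your argument is correct: the quotient of the full Rauzy graph $\Gamma_n$ by the reversal involution is well defined (reversal sends the edge $w_0\ldots w_n$ to an edge joining the images of its endpoints, with orientation reversed), the orbit counts $|V(G)|=\tfrac{1}{2}(\mathcal{C}(n)+\mathcal{P}(n))$ and $|E(G)|=\tfrac{1}{2}(\mathcal{C}(n+1)+\mathcal{P}(n+1))$ are right, connectivity follows from recurrence, every palindromic edge of length $n+1$ descends to a loop, and distinct loops are independent in the cycle space, so their number is at most $|E(G)|-|V(G)|+1$; the algebra then gives \eqref{nerovnost}. Note that the paper does not reprove this proposition -- it is quoted from \cite{BaMaPe} -- but the machinery it does import from that proof (the graph $G_n$ of Lemma~\ref{tree}) shows that the original argument runs on the \emph{reduced} Rauzy graph: vertices are reversal-classes of special factors, edges are reversal-classes of $n$-simple paths, and the palindromes of lengths $n$ and $n+1$ are located as centres of palindromic simple paths. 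Your version replaces this by the full Rauzy graph, which makes the bookkeeping cleaner: $\mathcal{P}(n)$ and $\mathcal{P}(n+1)$ enter directly as fixed-point counts of the involution rather than through an analysis of where palindromes sit inside simple paths, and the whole inequality collapses to ``number of loops $\leq$ cycle rank.'' What the reduced-graph formulation buys in exchange is the explicit equality criterion of Lemma~\ref{tree} (the quotient graph minus loops is a tree and all loops are palindromes), which is exactly what the rest of Section~4 needs; your Betti-number argument yields the inequality but would require a further step (identifying when the loops span the entire cycle space and when every loop is palindromic) to recover that criterion.
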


\medskip

It is shown in \cite{BuLuGlZa} that this bound can be used for
the characterization of rich words as well.
The following proposition states this fact.

\begin{prop}[\cite{BuLuGlZa}]\label{rich_pal} An infinite word  ${\mathbf u}$
with language closed under reversal is rich if and only if the
equality in \eqref{nerovnost} holds for all $n\in \mathbb{N}$.
\end{prop}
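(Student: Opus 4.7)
The plan is to establish both implications by analysing, for each $n$, the bilateral extension structure of palindromic factors of lengths $n$ and $n+1$, combined with the return-word characterization of richness in Proposition~\ref{returnrich}. Set
$g(n) := \mathcal{C}(n+1) - \mathcal{C}(n) + 2 - \mathcal{P}(n) - \mathcal{P}(n+1)$,
so that Proposition~\ref{BaMaPe} gives $g(n) \ge 0$; the target is to show $g(n) = 0$ for all $n$ if and only if every complete return word of every palindromic factor of $\mathbf{u}$ is itself a palindrome.

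For the forward direction, assume $\mathbf{u}$ is rich. The proof of Proposition~\ref{BaMaPe} in~\cite{BaMaPe} counts how palindromic factors of lengths $n$ and $n+1$ contribute to the second difference of the factor complexity through their left and right extensions, exploiting the fact that for a palindromic factor $p$ the set of right extensions coincides with the set of left extensions via closure under reversal. I would show that richness forces these palindromic extensions to exhaust all the branching: whenever $pa$ is a factor, Proposition~\ref{returnrich} applied to the first complete return word of $p$ that begins with $pa$ produces a palindrome of the form $pwp$, and this forces the symmetric extension on the left side of $p$. Substituting this symmetry into the counting used to derive \eqref{nerovnost} turns each $\le$ into an equality.

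For the backward direction, assume $g(n) = 0$ for every $n$ and suppose for contradiction that $\mathbf{u}$ is not rich. Then by Proposition~\ref{returnrich} there is a palindrome $p \in \mathcal{L}(\mathbf{u})$ with a non-palindromic complete return word $r$. Closure under reversal makes $\overline{r}$ also a complete return word of $p$, distinct from $r$. The letters immediately following the initial copy of $p$ in $r$ and in $\overline{r}$ give two right extensions of $p$ that cannot both be matched with palindromes of length $|p|+2$ centred at $p$; feeding this asymmetry into the extension counting produces a strict inequality in the estimate behind \eqref{nerovnost} at some $n \le |p|$, so $g(n) \ge 1$, contradicting the hypothesis.

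The main obstacle will be the bookkeeping connecting the global structural condition on complete return words to the level-by-level balance of palindromic versus arbitrary bispecial extensions underlying Proposition~\ref{BaMaPe}. Turning return-word symmetry into strict equalities at every $n$, and conversely localizing a single non-palindromic return word to one level at which strict inequality occurs, is the technical heart; the latter will probably require choosing $p$ to be a shortest palindrome violating the return-word condition, so that no shorter palindromic discrepancy interferes with the count.
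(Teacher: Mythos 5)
The paper does not actually prove this proposition --- it is quoted from \cite{BuLuGlZa} --- but its own machinery does contain a proof: Propositions~\ref{Pal_K} and~\ref{H_Pal} specialized to $N=H=0$, combined with Propositions~\ref{lps} and~\ref{returnrich}, give exactly the two implications. Measured against that, your plan has a genuine gap in each direction. The point you are missing is that the equality case of \eqref{nerovnost} is not a purely local statement about extensions of palindromes: by Lemma~\ref{tree} it is equivalent to a \emph{global} condition (the graph $G_n$ of special factors and $n$-simple paths is, after removing loops, a tree) together with a local one (loops are palindromes). Your forward direction only argues about symmetry of extensions of palindromic factors, which at best addresses the loop condition; it never addresses acyclicity of $G_n$, i.e.\ the uniqueness of paths between distinct special pairs $(w,\overline w)$ and $(v,\overline v)$. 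That uniqueness is exactly the ``realization/alternation'' argument of Lemma~\ref{alternace} and Proposition~\ref{H_Pal}, and it cannot be obtained by turning local $\le$'s into equalities, because the slack in \eqref{nerovnost} includes a contribution from cycles that pass through no palindrome at all.

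The backward direction contains a step that is simply false. You claim that a non-palindromic complete return word $r$ of a palindrome $p$ yields two right extensions of $p$ that ``cannot both be matched with palindromes of length $|p|+2$ centred at $p$,'' forcing strict inequality at some $n\le |p|$. Take the paper's example ${\mathbf u}=(abcabcacbacb)^{\omega}$ and $p=a$: every complete return word of $a$ is $abca$ or $acba$, hence non-palindromic, yet both palindromic extensions $bab$ and $cac$ occur in ${\mathbf u}$, so $a$ has a full set of palindromic extensions. The strict inequality at $n=1$ in that example is caused not by a palindromic-extension deficit but by the triangle $(a)\!-\!(b)\!-\!(c)\!-\!(a)$ in $G_1$, i.e.\ by a cycle. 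In general the asymmetry of $r=pfxv'y\overline{f}p$ (with $x\neq y$) becomes visible only at the bispecial factor $pf$, whose length is typically strictly larger than $|p|$, and it manifests as a multiple edge, a second path, or a non-palindromic loop in $G_{|pf|}$ --- which is precisely how Proposition~\ref{Pal_K} (with $N=0$) derives the contradiction. Choosing $p$ shortest does not repair this, because the level at which equality fails is governed by $|pf|$, not by $|p|$; you need to pass from $p$ to the first branching point inside the return word and then invoke the graph characterization of equality.
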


The most recent characterization of rich words given in \cite{BaPeSta}
exploits the notion of the bilateral order $b(w)$  of a factor and
the palindromic extension of a~factor. The  bilateral order was
introduced in \cite{Ca} as ${\rm b}(w) = \#\{ awb \mid awb\in
\mathcal{L}({\mathbf u}), a,b \in \mathcal{A} \} - \#\{ aw  \mid aw\in  \mathcal{L}({\mathbf u}), a \in \mathcal{A}
\}- \#\{ wb  \mid wb\in  \mathcal{L}({\mathbf u}), b \in \mathcal{A} \} +1\,.$ The
set of palindromic extensions of  a  palindrome $w \in  \mathcal{L}({\mathbf u})$
is defined by $ {\rm Pext}(w) =\{ awa \mid  awa \in \mathcal{L}({\mathbf
u}), a \in \mathcal{A} \} $.
\begin{prop}[\cite{BaPeSta}] \label{bw_a_pext} An infinite word  ${\mathbf u}$
with language closed under reversal is rich if and only if any
bispecial factor $w$ satisfies:
\begin{itemize}
\item if $w$ is non-palindromic, then ${\rm b}(w)=0$,
 \item if $w$
is  a palindrome, then ${\rm b}(w)=\#{\rm Pext}(w) -1. $
\end{itemize}
\end{prop}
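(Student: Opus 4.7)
The plan is to reduce both directions of the equivalence to Cassaigne-type second-difference identities for the two complexity functions and then read off the bilateral statement from the characterization of richness given in Proposition~\ref{rich_pal}. First, I would recall Cassaigne's formula
$$\mathcal{C}(n+2) - 2\mathcal{C}(n+1) + \mathcal{C}(n) \;=\; \sum_{w \in \mathcal{L}_n({\mathbf u})} {\rm b}(w),$$
and observe the palindromic analogue obtained from the elementary bijection between palindromes of length $n+2$ and pairs $(v,a)$ with $v$ a palindrome of length $n$ and $a \in {\rm Pext}(v)$:
$$\mathcal{P}(n+2) - \mathcal{P}(n) \;=\; \sum_{\substack{v \in \mathcal{L}_n({\mathbf u}) \\ v = \overline{v}}} \bigl(\#{\rm Pext}(v) - 1\bigr).$$
Two easy reductions shrink these sums to bispecials only: ${\rm b}(w)=0$ when $w$ is not bispecial, and $\#{\rm Pext}(v)=1$ whenever $v$ is a palindrome with only one left (equivalently, right) extension.

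Next I would introduce $f(n) := \mathcal{C}(n+1) - \mathcal{C}(n) + 2 - \mathcal{P}(n) - \mathcal{P}(n+1)$, so that Proposition~\ref{rich_pal} reads ``${\mathbf u}$ is rich iff $f \equiv 0$''. A direct check gives $f(0)=0$, and subtracting the two identities above produces
$$f(n+1)-f(n) \;=\; \sum_{\substack{w \in \mathcal{L}_n({\mathbf u})\text{ bispecial} \\ w \neq \overline{w}}} {\rm b}(w) \;+\; \sum_{\substack{w \in \mathcal{L}_n({\mathbf u})\text{ bispecial} \\ w = \overline{w}}} \bigl({\rm b}(w) - \#{\rm Pext}(w) + 1\bigr).$$

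For the ``if'' direction, the bilateral hypotheses make every summand on the right vanish for each $n$, which together with $f(0)=0$ gives $f \equiv 0$ and hence richness by Proposition~\ref{rich_pal}. For the ``only if'' direction, richness forces $f \equiv 0$, so the displayed sum is zero at every $n$; in order to extract the bilateral equalities term by term, I will prove the per-factor upper bounds ${\rm b}(w) \le 0$ for non-palindromic bispecial $w$ and ${\rm b}(w) \le \#{\rm Pext}(w) - 1$ for palindromic bispecial $w$. Since each summand is then nonpositive while the total is zero, every summand must vanish.

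The main obstacle is establishing these per-factor inequalities. They are specific to rich words, since short ad hoc examples of languages closed under reversal violate both of them (so the argument cannot be purely combinatorial on the extension set of $w$). I plan to encode the admissible extensions of a bispecial $w$ as a graph on $L(w) \cup R(w)$, with the palindromic extensions appearing as diagonal loops, and then use the unioccurrence of the longest palindromic suffix (Proposition~\ref{lps}) together with the palindromicity of complete return words of palindromes (Proposition~\ref{returnrich}) to rule out cycles in the off-diagonal edges; counting edges in the resulting forest (spanning, in the palindromic case) yields exactly the required bounds $|L(w)|+|R(w)|-1$ in the non-palindromic case and $|L(w)|-1$ off-diagonal edges in the palindromic case.
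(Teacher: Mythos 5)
A preliminary remark: the paper does not actually prove Proposition~\ref{bw_a_pext} --- it is imported from \cite{BaPeSta} --- so there is no in-paper argument to compare yours against; I am judging your proposal on its own terms. Your overall architecture is sound: Cassaigne's identity, the palindromic analogue $\mathcal{P}(n+2)-\mathcal{P}(n)=\sum_{v=\overline{v}}(\#{\rm Pext}(v)-1)$, the reductions to bispecial factors, the computation $f(0)=0$, and the telescoping are all correct, and the ``if'' direction is essentially complete once you note that reversal-closure implies recurrence (needed both for $\mathrm{b}(w)=0$ on non-bispecials and for $\#{\rm Pext}(v)=1$ on non-special palindromes).

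The genuine gap is in the ``only if'' direction, and it sits exactly where you flagged it: the per-factor bounds. You propose to prove the \emph{upper} bounds $\mathrm{b}(w)\le 0$ and $\mathrm{b}(w)\le\#{\rm Pext}(w)-1$ by showing the extension graph of $w$ is acyclic, using Propositions~\ref{lps} and~\ref{returnrich}. But what those tools naturally yield is \emph{connectivity}, not acyclicity, i.e.\ the \emph{lower} bounds. Concretely: since all complete return words of a palindromic bispecial $w$ are palindromes, the right-extension letter of each occurrence of $w$ equals the left-extension letter of the next occurrence, so the realized extensions of $w$ are exactly the edges of an infinite walk visiting every extension letter; hence the extension graph (off-diagonal part included) is connected and $\mathrm{b}(w)\ge\#{\rm Pext}(w)-1$. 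The analogous statement for non-palindromic bispecial $w$ follows from the alternation of $w$ and $\overline{w}$ and the palindromicity of the bridges between them (cf.\ Lemma~\ref{alternace} with $H=0$), giving $\mathrm{b}(w)\ge 0$. These lower bounds, being summand-wise nonnegative with total zero, finish the proof just as well --- so the theorem is within reach of your framework --- but your stated plan proves the wrong-signed inequality by a method you have not substantiated. If you insist on acyclicity, the route that actually works is to map a putative cycle $a_1-b_1-a_2-\cdots-b_k-a_1$ in the extension graph of $w$ (with $|w|=n$) to a cycle in the graph $G_{n+1}$ of Lemma~\ref{tree} via the $(n+1)$-simple paths $a_iwb_i$, contradicting the tree property that richness forces at every level; this requires handling the degenerate identifications $a_iw=wb_j$ and is a different mechanism from the lps/return-word argument you sketch. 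Either fix the sign of your inequalities and prove connectivity, or bring in the graphs $G_n$; as written, the key step is unsupported.
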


\section{Characterizations of words with finite defect}

Uniformly recurrent words with finite defect are characterized using the notion of oddities in Proposition 4.8 from~\cite{GlJuWiZa}.
It is based on the following lower bound.
\begin{prop}[Proposition 4.6~\cite{GlJuWiZa}] For any infinite word $\mathbf u$ it holds
$$D({\mathbf u}) \geq \#\bigl\{ \{v,\overline{v}\} \bigm | v\neq \overline{v}\
 \hbox{and}\  v \ \text{or}\ \overline{v}\  \hbox{is a~complete return word in ${\mathbf u}$ of a~palindrome}  \ w\bigr\}.$$
\end{prop}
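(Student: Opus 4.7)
My strategy is to apply Corollary~\ref{kolikunioccurent}, which measures $D(w)$ as the number of prefixes of $w$ whose longest palindromic suffix is not unioccurrent. I would first establish the auxiliary lemma: \emph{if $v$ is a~complete return word in $\mathbf u$ of a~palindrome $w$ with $v\neq\overline v$, then $lps(v)=w$.} Suppose $lps(v)=w'$ with $|w'|>|w|$. Then $w'$ is a~palindrome containing $w$ as its own suffix; by palindromicity $w$ is also a~prefix of $w'$, producing a~third occurrence of $w$ inside $v$ at position $|v|-|w'|$. Combined with the two CRW-occurrences at $0$ and $|v|-|w|$, the two-occurrences condition forces $|v|-|w'|\in\{0,\,|v|-|w|\}$, yielding either $w'=v$ (hence $v$ palindromic) or $w'=w$ --- both contradictions.

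\noindent Since $D(\mathbf u)=\sup\{D(u):u\in\mathcal{L}(\mathbf u)\}$, it suffices to produce, for any finite subfamily of $N$ pairs from the counted set, a~factor $u$ of $\mathbf u$ with $D(u)\ge N$. I pick, in each pair, a~representative $v_i$ that is actually a~CRW of some palindrome $w_i$ in $\mathbf u$, and take $u$ long enough to contain every $v_i$. For each $i$, let $k_i$ be the smallest position in $u$ at which $v_i$ or $\overline{v_i}$ ends, and write $v_i'\in\{v_i,\overline{v_i}\}$ for the representative realizing this minimum. Setting $P=lps(u[0..k_i])$: if $|P|>|v_i'|$ then $v_i'$ is a~proper suffix of the palindrome $P$, so $\overline{v_i'}$ is a~prefix of $P$ and ends at position $k_i-|P|+|v_i'|<k_i$, contradicting the minimality of $k_i$. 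Hence $|P|\le|v_i'|$, so $P$ is a~palindromic suffix of $v_i'$; the auxiliary lemma forces $P=w_i$, which appears twice in $v_i'$ and is therefore not unioccurrent in $u[0..k_i]$. By Corollary~\ref{kolikunioccurent} each $k_i$ contributes a~unit to $D(u)$.

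\noindent The main obstacle is the pairwise distinctness of the positions $k_i$. Suppose $k_i=k_j$ with $i\neq j$; uniqueness of same-length suffixes of $u[0..k_i]$ forces the two representatives to be nested, say $v_i'$ a~proper suffix of $v_j'$. Were $|v_i'|\ge|w_j|$, the terminal $w_j$ of $v_j'$ would lie inside $v_i'$; depending on whether $|w_i|<|w_j|$ or $|w_i|\ge|w_j|$, one extracts --- using that of two palindromes whose one is a~suffix of the other the shorter is also a~prefix --- either a~third occurrence of $w_i$ inside $v_i'$ or a~third occurrence of $w_j$ inside $v_j'$, both violating the CRW property. Hence $|v_i'|<|w_j|$, and $v_i'$ is a~suffix of the terminal $w_j$ of $v_j'$. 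Palindromicity of $w_j$ then makes $\overline{v_i'}$ a~prefix of $w_j$ and hence of $v_j'$, placing an occurrence of $\overline{v_i'}$ ending at position $|v_i'|-1<k_j=k_i$ --- contradicting the minimality of $k_i$. Thus the $k_i$ are distinct, $D(u)\ge N$, and letting $N$ exhaust all pairs gives the desired bound on $D(\mathbf u)$.
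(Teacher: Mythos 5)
Your proof is correct. Note that the paper itself gives no argument for this statement---it is imported verbatim as Proposition~4.6 of~\cite{GlJuWiZa}---so your write-up is a genuinely self-contained derivation rather than a variant of anything in the text. The structure is sound: the auxiliary lemma ($lps(v)=w$ for a non-palindromic complete return word $v$ of a palindrome $w$) is proved correctly by producing a forbidden third occurrence of $w$; the choice of the earliest ending position $k_i$ of $v_i$ or $\overline{v_i}$ correctly forces $lps(u[0..k_i])=w_i$, which is not unioccurrent there; and the case analysis showing the $k_i$ are pairwise distinct is complete (in particular you correctly rule out $|v_i'|=|w_j|$, which would make $v_i'$ palindromic). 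Combined with Corollary~\ref{kolikunioccurent} and the fact that $D(\mathbf u)$ is the supremum of $D$ over factors, this gives the bound, including the infinite case. Two cosmetic points you may want to tighten: (i) when $v_i'=\overline{v_i}$, you silently use that $\overline{v_i}$ is again a complete return word of the palindrome $w_i$ (true because reversal fixes $w_i$ and preserves the number and boundary positions of its occurrences), and this is needed both to invoke the auxiliary lemma for $v_i'$ and to get the second occurrence of $w_i$ in $u[0..k_i]$; (ii) in the final case the position ``$|v_i'|-1$'' is an index inside $v_j'$ rather than inside $u$---the occurrence of $\overline{v_i'}$ actually ends at $k_j-|v_j'|+|v_i'|<k_j$ in $u$, which is what the minimality of $k_i$ requires.
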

The set $\{v, \overline{v}\}$ is called an {\em oddity}.
It is clear that for uniformly recurrent words with a~finite number of distinct palindromes,
the defect is infinite, however the number of oddities is finite. Moreover,
even for uniformly recurrent words with infinitely many palindromes, it can hold
$$D({\mathbf u}) > \#\bigl\{ \{v,\overline{v}\} \bigm | v\neq \overline{v}\
 \hbox{and}\  v \ \text{or}\ \overline{v}\  \hbox{is a~complete return word in ${\mathbf u}$ of a~palindrome}  \ w\bigr\}.$$
We take an example for this situation from~\cite{GlJuWiZa}.
\begin{ex}
Let ${\mathbf u}=(abcabcacbacb)^{\omega}$, where $\omega$ denotes an infinite repetition, then $D({\mathbf u})=4$, but the number of oddities is equal to~$3$.
\end{ex}
However, the defect of an aperiodic word can also exceed the number of oddities. For instance, if we replace in Example~\ref{finite_defect} the substitution $\sigma$ with $0\to cabcabcbacbac, \ 1 \to d$, then it is easy to show that $D(\mathbf u)=4$, but the number of oddities is 3.

We can now recall the characterization of words with finite defect based on oddities.
\begin{prop}[Proposition 4.8~\cite{GlJuWiZa}]\label{oddities_defect}
A~uniformly recurrent word $\mathbf u$ has infinitely many oddities if and only if
$\mathbf u$ contains infinitely many palindromes and $D({\mathbf u})=\infty$.
\end{prop}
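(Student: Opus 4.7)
The plan is to prove each direction of the equivalence separately; the ``only if'' direction is immediate from the preceding lower bound, and the ``if'' direction is the substantive content.

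For the ``only if'' direction (infinitely many oddities implies both conditions on the right), infinite defect is immediate from the preceding lower bound $D(\mathbf u)\geq \#(\text{oddities})$. Infinitely many palindromes then follows from uniform recurrence: each palindrome of $\mathbf u$ has only finitely many return words, hence only finitely many complete return words, so each palindrome contributes at most finitely many oddities; therefore infinitely many oddities forces infinitely many distinct palindromic cores $w$ to serve as the base palindromes of non-palindromic complete return words.

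For the ``if'' direction, I argue by contradiction. Assume $\mathbf u$ is uniformly recurrent with infinitely many palindromes and $D(\mathbf u)=\infty$, but with only finitely many oddities. Then there exists $N$ such that every palindrome of $\mathbf u$ of length exceeding $N$ has only palindromic complete return words. By Corollary~\ref{kolikunioccurent}, $D(\mathbf u)=\infty$ yields infinitely many \emph{bad} prefixes $p$, meaning prefixes for which $lps(p)$ is not unioccurrent in $p$. Each such $p$ must satisfy $|lps(p)|\leq N$: indeed, the shortest complete return word $v_p$ of $lps(p)$ that is a suffix of $p$ must be non-palindromic, since otherwise $v_p$ would itself be a strictly longer palindromic suffix of $p$, contradicting the maximality of $lps(p)$. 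Combined with uniform recurrence (finitely many complete return words per palindrome), the pairs $(lps(p), v_p)$ ranging over all bad prefixes live in a finite set, and so by the pigeonhole principle some pair $(w,v)$ recurs infinitely often: there are infinitely many positions $n$ in $\mathbf u$ where $v$ ends and where $lps(u_0\ldots u_{n-1})=w$ with $|w|\leq N$.

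The final step---and the main obstacle---is to rule out this possibility. The strategy is to exploit the palindromic regularity above $N$ together with the presence of arbitrarily long palindromes. First, uniform recurrence plus infinitely many palindromes implies that $\mathcal L(\mathbf u)$ is closed under reversal (every factor sits inside a sufficiently long palindromic factor, which acts as a mirror). Second, palindromic complete return words of palindromes of length greater than $N$ are themselves longer palindromes again satisfying the same hypothesis; iterating this produces arbitrarily long palindromes in $\mathbf u$ with bounded recurrence gap. From this cascade one should argue that at every sufficiently late position of $\mathbf u$ some palindrome of length $>N$ ends, forcing $|lps(p)|>N$ for all long prefixes $p$ and contradicting $lps(p)=w$ at infinitely many prefix lengths. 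Implementing this ``palindromic covering'' argument cleanly---showing that the above-$N$ palindromic regularity combined with uniform recurrence forces almost every position to be the right endpoint of some palindrome of length exceeding $N$---is the combinatorial heart of the proof.
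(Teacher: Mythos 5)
The paper itself gives no proof of this proposition --- it is imported verbatim from \cite{GlJuWiZa} --- so your argument has to stand entirely on its own, and it does not yet close. The ``only if'' direction is correct: infinite defect follows from the lower bound $D(\mathbf u)\geq \#(\text{oddities})$, and if $\mathbf u$ had only finitely many palindromes then uniform recurrence would give each of them only finitely many complete return words, hence only finitely many oddities. Your reduction in the ``if'' direction is also correct and is genuine progress: assuming finitely many oddities you get an $N$ such that every palindrome of length exceeding $N$ has only palindromic complete return words; for each bad prefix $p$ the complete return word of $lps(p)$ occurring as a suffix of $p$ is non-palindromic (otherwise it would be a longer palindromic suffix), hence contributes an oddity, hence $|lps(p)|\leq N$; and the pigeonhole argument then fixes one pair $(w,v)$ recurring for infinitely many bad prefixes.

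The gap is the final step, which you yourself flag as ``the combinatorial heart'' and do not carry out. What you need is exactly that every sufficiently long prefix of $\mathbf u$ has a palindromic suffix of length exceeding $N$; but this covering statement is not something you can treat as an auxiliary fact to be supplied later --- modulo the bookkeeping you have already done, it is equivalent to the proposition itself (compare Theorem~\ref{vsechno}: once $lps$ of every long prefix is known to be unioccurrent, uniform recurrence forces $|lps(p)|>N$ for $|p|\geq 2R_{\mathbf u}(N)$, and conversely). Moreover, the route you sketch does not obviously produce it: iterating palindromic complete returns of a fixed long palindrome $q$ only exhibits long palindromic suffixes at the positions where $q$ itself ends, leaving all other positions uncovered, and nothing in ``palindromic regularity above $N$ plus arbitrarily long palindromes'' by itself shows that an arbitrary late position is the right endpoint of a palindrome of length $>N$. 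Closing this requires a genuinely new argument in the spirit of Lemma~\ref{alternace} and Proposition~\ref{Pal_K} run in the opposite direction (from palindromic complete returns of long palindromes to unioccurrence of long palindromic suffixes); this is precisely the content supplied in \cite{GlJuWiZa}. As written, the proof is incomplete.
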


As an immediate consequence of Proposition~\ref{oddities_defect}, we obtain the following characterizations of
infinite words with finite defect.
\begin{thm}\label{vsechno} Let ${\mathbf u}$ be a uniformly recurrent word containing infinitely many palindromes.
Then the following statements are equivalent:
\begin{enumerate}
\item[1.] $D({\mathbf u}) <  \infty $,

\item[2.] $\mathbf u$ has a~finite number of oddities,

\item[3.] there exists an integer $K$ such that all complete return words of any palindrome from $
\mathcal{L}({\mathbf u})$  of length at least $K$  are palindromes,

\item[4.] there exists an integer $H$ such that for any prefix $f$ of ${\mathbf u}$  with $|f| \geq H$
the longest palindromic  suffix  of $f$
 is unioccurrent in $f$.
\end{enumerate}
\end{thm}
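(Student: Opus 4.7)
My plan is to decompose the equivalence of the four statements into $1 \Leftrightarrow 2$, $2 \Leftrightarrow 3$, and $1 \Leftrightarrow 4$. Under the standing hypothesis that $\mathbf{u}$ contains infinitely many palindromes, Proposition~\ref{oddities_defect} gives $1\Leftrightarrow 2$ immediately in its contrapositive form, so I will focus on the remaining two equivalences.

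For $2 \Leftrightarrow 3$, I would list the finitely many oddities $\{v_1, \overline{v_1}\},\dots,\{v_N,\overline{v_N}\}$ and set $K := \max_i |v_i| + 1$. Then for any palindrome $w\in\mathcal{L}(\mathbf{u})$ with $|w|\geq K$ and any complete return word $r$ of $w$, one has $|r|\geq|w|\geq K$; if $r$ were non-palindromic, the pair $\{r,\overline{r}\}$ would be one of the listed oddities, contradicting $|r|\geq K$. This yields $2\Rightarrow 3$. For the converse, note that every oddity $\{v,\overline{v}\}$ arises from a non-palindromic complete return word $v$ of some palindrome $w$, which by 3 must satisfy $|w|<K$. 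Since the alphabet is finite, there are only finitely many palindromes in $\mathcal{L}(\mathbf{u})$ of length less than $K$, and by uniform recurrence each of them admits only finitely many complete return words; hence there are only finitely many candidates for $v$, and only finitely many oddities.

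For $1\Leftrightarrow 4$, the key observation is that the property ``$lps(w')$ is not unioccurrent in $w'$'' depends only on $w'$ and not on any longer prefix in which $w'$ sits. Combined with Corollary~\ref{kolikunioccurent} applied to every prefix, this shows that $D(\mathbf{u})=\sup_{w} D(w)$ equals the total number (finite or infinite) of prefixes $w'$ of $\mathbf{u}$ for which $lps(w')$ is not unioccurrent in $w'$. If 4 holds, the set of such prefixes is contained in $\{w'\mid |w'|<H\}$ and is therefore finite, whence $D(\mathbf{u})<\infty$. Conversely, if $D(\mathbf{u})<\infty$, then only finitely many such prefixes exist, and any $H$ strictly exceeding the length of the longest one verifies 4.

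The only delicate step is the direction $3\Rightarrow 2$, where uniform recurrence is indispensable: without it, a palindrome of bounded length could admit infinitely many distinct complete return words and the finiteness of oddities would not follow. The remaining implications reduce to straightforward finiteness accounting once the correct length thresholds are identified.
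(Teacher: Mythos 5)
Your proposal is correct and follows the same route as the paper: the paper proves exactly the three equivalences $1\Leftrightarrow 2$ (via Proposition~\ref{oddities_defect}), $2\Leftrightarrow 3$ (directly from the definition of oddities), and $1\Leftrightarrow 4$ (via Corollary~\ref{kolikunioccurent}), merely leaving implicit the finiteness bookkeeping that you spell out. Your explicit choice of thresholds $K$ and $H$ and your remark on where uniform recurrence is actually used are accurate elaborations of the paper's terse argument.
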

\pf
{\em 1.} and {\em 2.} are equivalent by Proposition~\ref{oddities_defect}.
It follows directly from the definition of oddities that {\em 2.} and {\em 3.} are equivalent.
Corollary~\ref{kolikunioccurent} implies that {\em 1.} and {\em 4.} are equivalent.

\pfk
It is easy to see that the last statement of Theorem~\ref{vsechno} can be equivalently rewritten as:
There exists an integer $H$ such that for any factor $f$ of ${\mathbf u}$  with $|f| \geq H$
the longest palindromic  suffix  of $f$ is unioccurrent in $f$.

Let us stress that if we put in the previous theorem  $D({\mathbf u})
= K=H=0$, the points {\em 1.}, {\em 3.}, and {\em 4.} become known results on rich words, see Propositions~\ref{returnrich} and~\ref{lps}.

\begin{ex}\label{finite_defect}
Let us provide an example of a~uniformly recurrent word $\mathbf u$ with finite defect and let us find for
$\mathbf u$ the lowest values of constants $K$ and $H$ from Theorem~\ref{vsechno}.
Take the Fibonacci word $\mathbf v$, i.e., the fixed point of $\varphi: 0 \to 01, \ 1 \to 0$.
Define $\mathbf u$ as its morphic image $\sigma(\mathbf v)$, where $\sigma: 0 \to cabcbac, \ 1 \to d$.

It is easy to show that all palindromes of length
greater than $1$ and the palindromes $a$, $b$, and $d$ have only
palindromic complete return words. Hint: long palindromes in $\mathbf
u$ contains in their center images of non-empty palindromes from
$\mathbf v$ that have palindromic complete return words by the richness of
$\mathbf v$. The only non-palindromic complete return of $c$ is $cabc$, thus there is exactly one oddity $\{cabc, cbac\}$.
In order to show that $D({\mathbf
u})=1$, it suffices to verify that no prefixes longer than $cabc$ have $c$ as their
longest palindromic suffix. This follows directly from the form of
$\sigma$. The lowest values of the constants $K$ and $H$ are: $K=2$, $H=5$.
\end{ex}
\section{Palindromic complexity of words with finite defect}
The aim of this section is to prove the following new characterization of infinite words with finite defect
based on a~relation between the palindromic and factor complexity.
\begin{thm}\label{PALaDefect}
Let ${\mathbf u}$ be a~uniformly recurrent word.
Then $D({\mathbf u}) < \infty $ if and only if there exists an integer $N$
such that $$
{\mathcal {P}}(n) + {\mathcal {P}}(n+1) = {\mathcal {C}}(n+1) -
{\mathcal {C}}(n) +2$$
holds for all $n \geq N$.
\end{thm}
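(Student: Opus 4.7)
My plan is to reduce the equivalence to a statement about the non-negative integer deficit
\[
\mathcal{T}(n):=\mathcal{C}(n+1)-\mathcal{C}(n)+2-\mathcal{P}(n)-\mathcal{P}(n+1),
\]
then connect $\mathcal{T}$ to the bispecial characterization of richness (Proposition~\ref{bw_a_pext}) and ultimately to the complete-return-word characterization of finite defect (Theorem~\ref{vsechno}). Both sides of the equivalence force $\mathbf{u}$ to contain infinitely many palindromes: from $D(\mathbf{u})<\infty$ this is listed among the facts following the definition of defect, while the eventual equality entails it because $\mathcal{C}$ is non-decreasing on a recurrent word, so the equality forces $\mathcal{P}(n)+\mathcal{P}(n+1)\ge 2$ for all large $n$. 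Uniform recurrence combined with infinitely many palindromes yields closure of $\mathcal{L}(\mathbf{u})$ under reversal — any factor $w$ embeds, together with $\overline{w}$, in any palindrome of length $\ge R_{\mathbf{u}}(|w|)$ — so Proposition~\ref{BaMaPe} applies and $\mathcal{T}(n)\ge 0$ for every $n$. The theorem thus reduces to showing that $\mathcal{T}(n)=0$ for all sufficiently large $n$ iff $D(\mathbf{u})<\infty$.

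For the forward implication I would invoke Theorem~\ref{vsechno}(3) to pick $K$ such that every complete return word of a palindrome of length $\ge K$ is itself a palindrome, and then prove a length-localized version of Proposition~\ref{bw_a_pext}: there exists $N=N(K)$ such that every bispecial factor $w$ with $|w|\ge N$ satisfies $b(w)=0$ when $w$ is non-palindromic and $b(w)=\#\mathrm{Pext}(w)-1$ when $w$ is a palindrome. The argument parallels the proof of Proposition~\ref{bw_a_pext}: a violation at a long bispecial factor produces a non-palindromic complete return word of some palindrome sitting near the centre of $w$, and for $N$ chosen large enough relative to $K$ this central palindrome has length $\ge K$, contradicting the choice of $K$. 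Once the bispecial conditions hold uniformly for $|w|\ge N$, the same double counting that derives Proposition~\ref{rich_pal} from Proposition~\ref{bw_a_pext}, applied length by length, yields $\mathcal{T}(n)=0$ for all $n\ge N$.

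For the converse I would argue by contrapositive. If $D(\mathbf{u})=\infty$, Theorem~\ref{vsechno}(2) supplies infinitely many oddities $\{v,\overline{v}\}$, hence palindromes of arbitrarily large length admitting a non-palindromic complete return word. Each such pair gives rise to a bispecial factor violating one of the two clauses of Proposition~\ref{bw_a_pext}, and such a violation contributes strictly positively to $\mathcal{T}$ at the corresponding length, by the same length-by-length identity used in the forward direction. Since the underlying palindromes can be chosen arbitrarily long, $\mathcal{T}(n)>0$ for infinitely many $n$, contradicting the eventual equality.

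The main obstacle, arising symmetrically in both directions, is the precise length-localized refinement of Proposition~\ref{bw_a_pext}: one must establish a tight correspondence between non-palindromic complete return words of long palindromes and strict inequalities in the bound of Proposition~\ref{BaMaPe}, tracked at the level of bispecial factors and their palindromic extensions $\mathrm{Pext}(w)$. The counting identity behind the rich case is transparent only when \emph{every} bispecial factor of the relevant length is well-behaved, so the technical heart of the argument is the uniform choice of $N$ from $K$ — and conversely — in this localized version, i.e.\ the bookkeeping on the bispecial extension tree that couples the palindrome return-word structure to the first differences of $\mathcal{C}$ and $\mathcal{P}$.
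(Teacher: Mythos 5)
Your reduction to Theorem~\ref{vsechno} and the preliminary observations (both hypotheses force infinitely many palindromes, hence closure of $\mathcal{L}(\mathbf u)$ under reversal, hence $\mathcal{T}(n)\ge 0$ by Proposition~\ref{BaMaPe}) are correct and match the paper's architecture. The gap is in the bridge you choose. The identity underlying Proposition~\ref{bw_a_pext} is a \emph{second-difference} identity: summing the bilateral orders of the bispecial factors of length $n$ against $\sum_{w\ \mathrm{pal},\,|w|=n}(\#{\rm Pext}(w)-1)$ computes $\mathcal{T}(n+1)-\mathcal{T}(n)$, not $\mathcal{T}(n)$. So even granting your localized version of Proposition~\ref{bw_a_pext} for all $|w|\ge N$, you only conclude that $\mathcal{T}$ is eventually \emph{constant}, equal to $\mathcal{T}(N)$; in the rich case the base value $\mathcal{T}(0)=0$ is free, but here $\mathcal{T}(N)=0$ is precisely the non-trivial content and your plan never supplies it. The same conflation of increments with values undermines the converse: a single bispecial factor violating the conditions of Proposition~\ref{bw_a_pext} at length $n$ need not make $\mathcal{T}(n+1)-\mathcal{T}(n)>0$ (other, possibly weak, bispecial factors of the same length can contribute negatively), and a positive increment at infinitely many lengths does not preclude $\mathcal{T}$ returning to $0$ elsewhere. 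Finally, the ``length-localized refinement'' that you yourself flag as the technical heart is asserted rather than proved, so the proposal is a plan with its central lemma missing.

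The paper sidesteps both difficulties by localizing a different characterization: from the proof of Proposition~\ref{BaMaPe} it extracts Lemma~\ref{tree}, a criterion stated \emph{at each individual $n$} --- equality in \eqref{nerovnost} holds at $n$ iff the graph $G_n$ of special pairs $(w,\overline w)$ joined by $n$-simple paths is a tree after removing loops and all loops are palindromes. Proposition~\ref{Pal_K} then shows that a non-palindromic complete return word of a long palindrome forces either a non-palindromic loop or a cycle/multi-edge in $G_n$ for a specific $n$, and Proposition~\ref{H_Pal} (via Lemma~\ref{alternace} on the alternation of $w$ and $\overline w$) verifies both tree conditions for all $n\ge H$ under statement \emph{4.} of Theorem~\ref{vsechno}. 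If you want to salvage your route through bilateral orders, you must either prove a direct (non-telescoped) formula for $\mathcal{T}(n)$ itself, or supply the missing base case $\mathcal{T}(N)=0$ by an independent argument --- at which point you will essentially be rebuilding Lemma~\ref{tree}.
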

Notice that if we set $N=0$ in the previous theorem, then we
obtain the known characterization of rich words from
Proposition~\ref{rich_pal} (which holds even under a~weaker assumption that
${\mathcal L}(\mathbf u)$ is closed under reversal).

In the sequel, we will prove two propositions that together with the equivalent
characterizations of words with finite defect from Theorem~\ref{vsechno} imply
Theorem~\ref{PALaDefect}.
As we have already mentioned, all words with language
closed under reversal satisfy the inequality in
Proposition~\ref{BaMaPe}. A~direct consequence of its  proof
 given in \cite{BaMaPe} is a~necessary and
sufficient condition for the equality in~\eqref{nerovnost}. To
formulate this condition in Lemma \ref{tree}, we introduce two
auxiliary notions.

Let $\mathbf u$ be an infinite word with language closed under reversal
and let $n$ be a given positive integer.

An {\it $n$-simple path} $e$ is a factor of $\mathbf u$ of
length at least $n+1$ such that the only special (right or left)
factors of length $n$ occurring in $e$ are its prefix and  suffix
of length $n$. If $w$ is the prefix of $e$ of length $n$ and $v$
is the suffix of $e$ of length $n$, we say that the $n$-simple
path $e$ starts in $w$ and ends in $v$.

We will denote by  $G_n$ an undirected graph whose set
of vertices is formed by unordered pairs $(w,\overline{w})$ such
that $w\in {\mathcal{L}_n}(u)$ is right or left special. We
connect two
 vertices $(w,\overline{w})$ and $(v,\overline{v})$  by
an unordered
 pair $(e,\overline{e})$ if $e$ or $\overline{e}$ is an $n$-simple path
 starting in $w$ or $\overline{w}$ and ending in $v$ or
 $\overline{v}$.

\medskip

\noindent Note that the graph $G_n$ may have multiple edges and
loops.

\medskip

\begin{lem}\label{tree}
Let $\mathbf u$ be an infinite word with language closed under reversal. The equality in \eqref{nerovnost} holds for an integer $n \in \mathbb N$
if and only if both of the
following conditions are met:
 \begin{enumerate}
\item[1.]\label{prvni}  The graph
 $G_n$ after removing loops is a tree.
 \item[2.]\label{druha}  Any $n$-simple path forming a~loop in the graph $G_n$ is a palindrome.
 \end{enumerate}
\end{lem}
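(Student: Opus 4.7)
The plan is to revisit the argument of \cite{BaMaPe} that established the inequality \eqref{nerovnost} and to track exactly when each estimate can be saturated, so that the two stated conditions emerge as the precise equality witnesses.

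First I would rewrite $\mathcal{C}(n+1)-\mathcal{C}(n)+2$ as a count involving the graph $G_n$. Since non-special factors of length $n$ extend uniquely on both sides, every factor of length $n+1$ is either a proper sub-piece of some $n$-simple path or sits at the branching letter leaving a right special factor. This lets me express $\mathcal{C}(n+1)-\mathcal{C}(n)$ as (the number of edges of $G_n$ counted with loops, with reversal pairing taken into account) minus (the number of vertices), up to a constant. By the standard estimate $E-V\geq -1$ for a connected undirected graph, with equality exactly when the graph is a tree, I recover the additive $+2$ and see that the slack here equals the first Betti number of $G_n$ after loops are removed.

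Next I would account for $\mathcal{P}(n)+\mathcal{P}(n+1)$. Every palindrome of length $n$ or $n+1$ is naturally centered inside $\mathbf{u}$; following it outward until a special factor of length $n$ is encountered associates to it either a palindromic vertex of $G_n$ or an $n$-simple path that is itself a palindrome, i.e.\ a palindromic loop. This yields an estimate of the form $\mathcal{P}(n)+\mathcal{P}(n+1)\leq (\text{palindromic vertices})+(\text{palindromic loops})+\text{constants}$, which combines with the edge--vertex count above to reprove \eqref{nerovnost}.

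Equality in \eqref{nerovnost} therefore forces both estimates to be tight: the graph-theoretic one forces $G_n$ with loops removed to be a tree, which is condition~1, and the palindrome-counting one forces every loop of $G_n$ to be realised by an $n$-simple path that is itself a palindrome, which is condition~2. Conversely, when both conditions are in force each of the two bounds is saturated term by term, so \eqref{nerovnost} becomes an equality. The main obstacle is the careful bookkeeping that aligns palindromes of two consecutive lengths with the vertices and edges of the single graph $G_n$ (in particular handling the parity of $n$-simple paths that carry a centered palindrome); this is essentially the technical core of \cite{BaMaPe}, and I would only need to reproduce the portion of that argument required to read off when each inequality saturates.
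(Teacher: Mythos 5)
The paper offers no proof of this lemma at all---it is stated as ``a direct consequence of the proof'' of Proposition~\ref{BaMaPe} given in \cite{BaMaPe}---and your plan reconstructs precisely that argument: compute $\mathcal{C}(n+1)-\mathcal{C}(n)$ from the Euler characteristic of the reduced Rauzy graph quotiented by reversal, match $\mathcal{P}(n)+\mathcal{P}(n+1)$ against the fixed points of the reversal involution (palindromic vertices and palindromic loops of $G_n$), and read off the two saturation conditions. This is the intended route and your two equality witnesses are the correct ones; the deferred bookkeeping (connectivity of $G_n$ from recurrence, and the reduction of \eqref{nerovnost} to the chain $\#\{\text{palindromic loops}\}\leq\#\{\text{loops}\}\leq E(G_n)-V(G_n)+1$, whose two links saturate exactly under conditions~2 and~1 respectively) is routine once set up as you describe.
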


\begin{prop}\label{Pal_K} Let ${\mathbf u}$ be an infinite word with language
 closed under reversal. Suppose that there exists an integer $N$  such that for all $n\geq N$ the equality
 ${\mathcal {P}}(n) + {\mathcal {P}}(n+1)= {\mathcal {C}}(n+1) -
{\mathcal {C}}(n) +2$  holds. Then the complete return words of any
palindromic factor of length $n \geq N$ are palindromes.
\end{prop}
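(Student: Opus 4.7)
The plan is to argue by induction on the length of $r$, using the tree-plus-palindromic-loops structure of $G_n$ provided by Lemma~\ref{tree} for every $n \geq N$, together with closure of $\mathcal{L}(\mathbf u)$ under reversal. Let $w$ be a palindrome of length $n \geq N$ and $r$ a complete return word of $w$. Since $w = \overline w$, the word $\overline r$ is also a complete return word of $w$, so it suffices to show $r = \overline r$.

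First I would reduce to the case where $w$ is bispecial. If $w$ is not right-special then, by palindromicity, it is not left-special either, and $w$ has a unique letter $a$ as both its left and right extension; the palindrome $awa$ has length $n+2 \geq N$, and complete return words of $w$ are in bijection with complete return words of $awa$ via $r \mapsto a \cdot r \cdot a$, preserving palindromicity. Iterating, we replace $w$ by the longer bispecial palindrome obtained in this way. So assume $w$ is bispecial; then $v = \{w, \overline w\}$ is a vertex of $G_n$, and the complete-return property says that the walk $W_r$ associated with $r$ is a closed walk at $v$ that does not revisit $v$ in the middle.

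If $W_r$ consists of a single edge, that edge is a loop at $v$, which by condition~2 of Lemma~\ref{tree} is a palindromic $n$-simple path, so $r$ is a palindrome. Otherwise the first and last edges must be non-loops (a loop at $v$ in the interior would revisit $v$), and since $G_n$ minus loops is a tree, removing $v$ disconnects the neighbor $u_1$ of $v$ from the rest, forcing the walk to leave and re-enter $v$ through the same edge $\{f, \overline f\}$. As oriented $n$-simple paths, the first edge is $f$ (with $w$ as prefix, length-$n$ suffix $y$) and the last is $\overline f$ (with $w$ as suffix, length-$n$ prefix $\overline y$), so
\[
r \;=\; f \cdot_n r' \cdot_n \overline f,
\]
where $\cdot_n$ denotes overlap-$n$ concatenation and $r'$ corresponds to the sub-walk at $u_1$. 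A direct computation gives $\overline r = f \cdot_n \overline{r'} \cdot_n \overline f$, so $r$ is a palindrome if and only if $r'$ is.

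The task therefore reduces to showing that $r'$ is a palindrome, and this is the main obstacle. The word $r'$ begins with $y$ and ends with $\overline y$; one shows first that $y$ must be a palindrome (using that otherwise the walk cannot close consistently at $v$ through the single edge $\{f,\overline f\}$, forcing the turn-back to flip orientation, which in the tree-plus-palindromic-loops graph can happen only through a palindromic loop structure at $u_1$). Once $y = \overline y$, the sub-walk at $u_1$ decomposes $r'$ into complete return words of the palindrome $y$ of length $n \geq N$; each such piece has length strictly less than $|r|$, so by the induction hypothesis each is a palindrome. The delicate point is that these palindromic pieces must appear in a symmetric order along $r'$: here one uses that $\overline r$ is also a complete return of $w$, so its associated walk would realize the reversed arrangement; if this order differed from the original, we would obtain two distinct walks and hence two distinct complete returns whose combined structure contradicts condition~1 of Lemma~\ref{tree} (uniqueness of paths in the tree $G_m$ for a suitable $m \geq N$, where the relevant longer factors $y c_i$ forming the "turn-off" letters at $u_1$ become vertices). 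This symmetric decomposition then yields $r' = \overline{r'}$, completing the induction.
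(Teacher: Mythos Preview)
Your approach differs substantially from the paper's, and the inductive step has genuine gaps.

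The paper does not induct on $|r|$ and does not work in $G_{|w|}$. It argues directly by contradiction: assuming a non-palindromic complete return $v$ of a palindrome $p$ with $|p|\ge N$, it writes $v = pfx\,v'\,y\,\overline{f}p$ with $x\neq y$ letters, sets $w:=pf$, and works in $G_n$ for $n=|w|$ --- not for $n=|p|$. With this choice $w$ is right special and, because $v$ is a \emph{complete} return of $p$, the factors $w$ and $\overline w$ can occur in $v$ only as prefix and suffix. Hence either $v$ is itself an $n$-simple path (a non-palindromic loop, contradicting condition~2 of Lemma~\ref{tree}), or the first and last $n$-simple paths in $v$ end/start at special factors $A,B\notin\{w,\overline w\}$, giving two edges $(A,\overline A)$--$(w,\overline w)$--$(B,\overline B)$ together with a walk from $(A,\overline A)$ to $(B,\overline B)$ inside $p_2\ldots p_k f x v' y \overline f p_k\ldots p_2$ that avoids $(w,\overline w)$; this is a cycle, contradicting condition~1. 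No induction is needed.

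In your argument the two decisive claims are not established. First, the assertion that the endpoint $y$ of the first $n$-simple path is a palindrome is unsupported: the ``orientation flip'' from $y$ to $\overline y$ along $r'$ can occur via a palindromic loop at \emph{any} vertex deeper in the subtree, not only at $u_1$, so nothing forces $y=\overline y$; without this you cannot feed the pieces of $r'$ back into the induction hypothesis, which applies only to palindromic factors. Second, even if each piece $s_i$ were a palindromic complete return of $y$, your symmetry argument fails: $r$ and $\overline r$ being two (possibly different) complete returns of $w$ is perfectly compatible with condition~1 --- a tree admits many distinct closed walks at a vertex, so ``two distinct walks'' is no contradiction, and the appeal to some unspecified $G_m$ with $m>n$ is too vague to repair this. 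The paper's device of enlarging $n$ to $|pf|$ is precisely what lets one convert the non-palindromicity of $v$ into an immediate cycle, bypassing both difficulties.
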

\pf  Assume the contrary: Let $p= p_1p_2 \ldots p_k$  be a~palindrome  with  $k \geq N$ and let $v$ be its complete return
word which is not a~palindrome. Clearly $|v|>2|p|$. Then there exist a~factor $f$
(possibly empty) and two different letters $x$ and $y$ such that
$v=pfxv'y\overline{f}p$.

Let us consider the graph $G_n$, where $n$ is the length of the factor
$w:=pf$, i.e., $n \geq N$. Since the language of ${\mathbf u} $ is closed under
reversal, the factor $w$ is right special - the letters $x$ and
$y$ belong to its right extensions.

If the complete return word $v$ contains no other right or left
special factors, then the non-palindromic $v$ is an $n$-simple
path which starts in $w=pf$ and ends in
$\overline{w}=\overline{f}p$ - a~contradiction with the
condition~{\em 2.} in Lemma~\ref{tree}.

Let  $v$ contain other left or right special factors of length
$n$. We find the prefix of $v$ which is an $n$-simple path. This simple
path starts in $w$, its ending point is a special factor, we
denote it by $A$. Since $v$ is a~complete return word of $p$, we
have $A\neq w,\overline{w}$. So in the graph $G_n$, the vertices
$ (w,\overline{w})$ and $(A,\overline{A})$ are connected with an
edge. Similarly, we find the suffix of $v$ which is an $n$-simple path
and we denote its starting point by $B$, its ending point is
$\overline{w}$. Again, $B\neq w,\overline{w}$ and the vertices $
(w,\overline{w})$ and $(B,\overline{B})$ are connected with an
edge. So in $G_n$ we have a~path with two edges which connects
$(A,\overline{A})$ and $(B,\overline{ B})$  and the vertex $
(w,\overline{w})$ is its intermediate vertex.

\medskip

The special factors $A$ and $B$ are factors of $p_2\ldots
p_kfxv'y\overline{f}p_k\ldots p_2$,
it means that in the graph $G_n$  there exists a walk, and
therefore a path\footnote{Along a walk vertices may occur with repetition, in a path any vertex appears at most once.} as well,
between the vertices $(A,\overline{A})$ and $(B,\overline{B})$ which
does not use the vertex $(w,\overline{w})$.

\medskip
Finally, if $(A,\overline{A})$ and $(B,\overline{B})$ coincide,
then we have in $G_n$ a multiple edge between $(A, \overline{A})$ and $(w, \overline{w})$. If $(A,\overline{A})\neq
(B,\overline{B})$, then  in $G_n$ we have  two different paths
connecting $(A,\overline{A})$ and $(B,\overline{ B})$. Together,
$G_n$ is not a tree after removing loops - a~contradiction  with the condition~{\em
1.} in Lemma~\ref{tree}.

\pfk

\begin{lem}\label{alternace} Let ${\mathbf u}$ be an infinite word whose language is closed under reversal.
Let ${\mathbf u}$ have the following property:
there exists an integer $H$ such that for any factor $f \in \mathcal{ L}({\mathbf u})$  with $|f| \geq H$
the longest palindromic  suffix  of $f$ is unioccurrent in $f$.
Let $w$ be a non-palindromic factor of ${\mathbf u}$ with $|w|\geq H$
and $v$ be a palindromic factor of ${\mathbf u}$ with $|v|\geq H$.
 Then
\begin{itemize}
\item  occurrences of $w$ and $\overline{w}$ in ${\mathbf u}$
alternate, i.e., any complete return word of $w$ contains the
factor $\overline{w}$,

\item any factor $e$ of ${\mathbf u}$ with a prefix $w$ and a suffix
$\overline{w}$, which has no other occurrences of $w$ and
$\overline{w}$, is a~palindrome,

\item any complete return word of $v$ is a~palindrome.

\end{itemize}
\end{lem}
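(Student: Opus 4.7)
The plan is to apply the unioccurrence hypothesis on the longest palindromic suffix to a carefully chosen factor in each item, splitting on whether $|lps(f)|$ is at least $|w|$ (respectively $|v|$) or shorter. In the long case, palindromicity of $lps(f)$ will transport information from one end of $f$ to the other: if $\overline{w}$ (or $v$) appears as a suffix of $lps(f)$, then $w$ (or $v$) appears as a prefix of $lps(f)$, and the admissible positions of $w$ (or $v$) in $f$ will pin $lps(f)$ down. In the short case, $lps(f)$ will turn out to be a palindromic proper suffix of $\overline{w}$ and hence also a prefix of $w$, so unioccurrence is violated at position $0$ versus position $|f|-|lps(f)|$.

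For item 1, I would take a complete return word $r$ of $w$, assume for contradiction that $\overline{w}$ does not occur in $r$, and set $p:=lps(r)$, which is unioccurrent in $r$ since $|r|\geq|w|\geq H$. If $|p|\geq|w|$, the suffix $w$ of $r$ is a suffix of $p$, so $\overline{w}$ is a prefix of $p$ and hence a factor of $r$, contradicting the assumption. If $|p|<|w|$, then $p$ is a proper suffix of $w$ and therefore appears at position $|w|-|p|$ inside the prefix copy of $w$ in $r$ and at position $|r|-|p|$ inside the suffix copy; these positions differ because $|r|>|w|$, contradicting unioccurrence.

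For items 2 and 3 I would argue uniformly, with $f$ denoting $e$ in item 2 or a complete return word of $v$ in item 3, and with $x$ standing for $w$ or $v$; set $p:=lps(f)$. If $|p|\geq|x|$, the suffix $\overline{x}$ of $f$ is a suffix of $p$, so $x$ is a prefix of $p$ and occurs in $f$ at position $|f|-|p|$; the occurrence constraints built into $f$ force this position to be $0$, giving $p=f$, so $f$ is a palindrome. The delicate subcase is item 3 with $|p|=|v|$, where $|f|-|p|=|f|-|v|$ is also admissible, but then $p=v$ has two occurrences in $f$, contradicting unioccurrence. If $|p|<|x|$, then $p$ is a proper palindromic suffix of $\overline{x}$, hence a prefix of $x$, so $p$ occurs both at position $0$ and at position $|f|-|p|>0$, again contradicting unioccurrence. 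The main obstacle to organise will be the bookkeeping of admissible occurrences in each case; once palindromicity of $p$ is used to swap prefix and suffix information, the remaining combinatorial content is routine.
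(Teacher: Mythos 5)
Your proof is correct and follows essentially the same strategy as the paper's: apply the unioccurrence of the longest palindromic suffix to the relevant factor (a complete return word, or $e$ itself) and use palindromicity to reflect the suffix $w$ (resp.\ $\overline{w}$, $v$) into a prefix $\overline{w}$ (resp.\ $w$, $v$), pinning down its position. The only cosmetic difference is in the second item, where the paper identifies $e$ with the longest palindromic suffix of a complete return word of $w$, whereas you apply the hypothesis directly to $e$; both routes are equally valid.
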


\pf Consider a~non-palindromic factor $w$ such that $|w|\geq H$.  Let $f$ be a complete
return word of $w$. Since $|w| \geq H$, its complete return word satisfies
$|f|\geq H$. According to the assumption, $lps(f)$, the longest palindromic suffix of $f$, is
unioccurrent in $f$. Its length satisfies necessarily $|lps(f)|
> |w|$ - otherwise a contradiction with the unioccurrence of
$lps(f)$. Clearly, the palindrome   $lps(f)$ has a suffix $w$ and
thus a prefix $\overline{w}$, i.e., the complete return word $f$ of
$w$ contains $\overline{w}$ as well. Moreover, we have proven
that any factor $e$, which has a~prefix $\overline{w}$ and a~suffix $w$ and which has no other occurrences of $w$ and
$\overline{w}$, is the longest palindromic suffix of a~complete return word
of $w$, therefore $e=lps(f)$, i.e., the factor $e$ is a~palindrome.

Consider a~palindromic factor $v$, its complete return word $f$
and the longest palindromic suffix of $f$. Since $v$ is a~palindromic suffix of $f$,
necessarily  $|lps(f)|\geq |v|$.  As $|v|
\geq H$,  $lps(f)$ is unioccurrent in $f$. Hence, $|lps(f)|>|v|$. If $lps(f)$ is shorter than the whole $f$, then
the complete return word $f$ contains at least three occurrences
of $w$ - a contradiction. Thus, $lps(f)=f$, i.e., $f$ is a~palindrome.

\pfk

\begin{prop}\label{H_Pal} Let ${\mathbf u}$ be an infinite word whose language is closed under reversal.  Let ${\mathbf u}$ have the following property:
 there exists an  integer $H$ such that for any factor $f \in \mathcal{ L}({\mathbf u})$  with $|f| \geq H$
the longest palindromic  suffix  of $f$
 is unioccurrent in $f$. Then  $$2+ \mathcal{C}(n+1)-\mathcal{C}(n) = \mathcal{P}(n+1)+
 \mathcal{P}(n)\quad  \hbox{
  for any } \ n \geq H\,.$$
\end{prop}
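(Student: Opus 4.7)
The plan is to apply Lemma~\ref{tree} and verify both of its conditions for every $n \geq H$. Proposition~\ref{BaMaPe} already gives the inequality $\leq$; equality at $n$ is equivalent to (i) $G_n$ with loops removed being a tree and (ii) every $n$-simple path realizing a loop being a palindrome. Since the hypothesis of this proposition is exactly the hypothesis of Lemma~\ref{alternace}, I may use freely its three conclusions: the alternation of $w$ and $\overline{w}$ for long non-palindromic $w$, the palindromicity of any factor that bridges $w$ to $\overline{w}$ without further internal occurrences of $w$ or $\overline{w}$, and the palindromicity of complete return words of long palindromes.

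Condition (ii) I reduce to a case analysis on a loop $e$ at a vertex $(w,\overline{w})$. Because $w$ and $\overline{w}$ are special and $e$ is $n$-simple, occurrences of $w$ and $\overline{w}$ in $e$ may appear only as its prefix or suffix. If $e$ has the same representative, say $w$, as both prefix and suffix, then $e$ is a complete return word of $w$: when $w$ is a palindrome, the third bullet of Lemma~\ref{alternace} forces $e$ to be a palindrome; when $w \neq \overline{w}$, the first bullet forces $\overline{w}$ to occur in $e$, contradicting the $n$-simple property since $\overline{w}$ is also special. The remaining case, where $e$ has prefix $w$ and suffix $\overline{w}$ with $w \neq \overline{w}$, is handled directly by the second bullet of Lemma~\ref{alternace}, which yields $e$ palindromic.

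Condition (i) splits into connectedness and acyclicity modulo loops. Connectedness of $G_n$ follows from the strong connectedness of the Rauzy graph $\Gamma_n$ of the recurrent word $\mathbf u$ (recurrence being ensured by closure of $\mathcal L(\mathbf u)$ under reversal); the quotient by reversal preserves connectedness. For acyclicity, I argue by contradiction: from a shortest non-loop simple cycle in $G_n$, once realized in $\mathbf u$ by choosing compatible orientations of its edges and joining consecutive $n$-simple paths along their length-$n$ overlap, I extract two distinct $n$-simple path realizations between the same pair of special vertices. Using uniform recurrence to assemble a single factor of $\mathbf u$ exhibiting both realizations, I obtain a factor whose longest palindromic suffix has at least two occurrences, contradicting the lps-unioccurrence hypothesis.

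The principal obstacle is the acyclicity step of (i): choosing the orientation of each edge $(e,\overline{e})$ consistently along the hypothetical cycle, concatenating the resulting $n$-simple paths with the correct length-$n$ overlaps, and converting this combinatorial cycle into a concrete factor that violates the lps-unioccurrence. The verification of (ii), and the connectedness half of (i), are routine given Lemma~\ref{alternace} and the recurrence of $\mathbf u$.
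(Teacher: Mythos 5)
Your reduction to Lemma~\ref{tree} is the paper's strategy, and your treatment of condition~\emph{2.} is correct and even slightly more careful than the paper's: you explicitly rule out, via the alternation property, a loop realized by an $n$-simple path from $w$ back to $w$ with $w\neq\overline{w}$, and you dispatch the remaining cases with the second and third bullets of Lemma~\ref{alternace}. The connectedness half of condition~\emph{1.} is also unproblematic (recurrence suffices; note that the hypothesis gives you only recurrence via closure under reversal, not the uniform recurrence you invoke later).

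The genuine gap is exactly where you flag it: the acyclicity of $G_n$. Your plan --- take a shortest cycle, realize it in $\mathbf u$ by orienting its edges compatibly, and manufacture a factor whose longest palindromic suffix occurs twice --- is not the paper's argument and, as stated, has two unresolved difficulties. First, a path or cycle in $G_n$ is a sequence of reversal-classes of $n$-simple paths; consecutive edges need not be concatenable as factors of $\mathbf u$ (the edge into a vertex may end in $A$ while the next edge leaves from $\overline{A}$), so ``choosing compatible orientations'' is precisely the thing that can fail, and nothing in your sketch forces the concatenation to be a factor of $\mathbf u$. Second, even granted two distinct realizations between the same pair of vertices inside one factor, the step to ``some prefix of it has a non-unioccurrent longest palindromic suffix'' is asserted, not derived. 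The paper circumvents both problems by arguing at the level of occurrences in $\mathbf u$ from the start: it defines a \emph{realization} of a path between $(w,\overline{w})$ and $(v,\overline{v})$ as a factor $u_i\cdots u_\ell$ running from an occurrence of $w$ or $\overline{w}$ to the next occurrence of $v$ or $\overline{v}$ with none of the four words strictly inside, and then shows this realization is \emph{unique}: extending $f=u_i\cdots u_\ell$ to the first subsequent occurrence of $\overline{w}$ at position $m$ yields, by Lemma~\ref{alternace}, a palindrome $u_i\cdots u_m$, whose mirror symmetry shows that the suffix of length $|f|$ is $\overline{f}$, i.e.\ the next realization-index $m-|f|+1$ belongs to the \emph{same} realization, and no index in between can start any realization. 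Uniqueness of realizations for every pair of vertices then gives uniqueness of paths in $G_n$, i.e.\ the tree property. You would need either to reproduce an argument of this kind or to actually carry out the orientation/gluing and lps-duplication steps; as it stands the central claim of condition~\emph{1.} is unproven.
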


\pf

We have to show that both conditions of Lemma~\ref{tree} are
satisfied for any $n \geq H$.\\
The condition~{\em 1.}: Let $(w,\overline{w})$ and $ (v,\overline{v})$ be two distinct vertices in
 the graph $G_n$, where  $n\geq H$. We say that an unordered  couple $ (f,\overline{f})$
  is a~{\em realization} of a path  between  these  two  vertices
if
\begin{itemize}
\item   either the factor $f$ or the factor $\overline{f}$ has the
property: $w$ or $\overline{w}$ is its prefix and $v$ or
$\overline{v}$ is its suffix,

\item there exist indices  $i, \ell \in \mathbb{N}, i< \ell$  such
that either the factor  $f$  or  the factor $\overline{f}$
coincides with the factor $u_iu_{i+1}\ldots u_\ell$ and factors
$w$, $\overline{w}$, $v$, and $\overline{v}$ do not occur in
$u_{i+1}\ldots u_{\ell-1}$.
\end{itemize}
The number  $i$ is called an {\em index} of the realization $(f,\overline{f})$.

Since ${\mathbf u}$ is recurrent, there exists at least one
realization  for any pair of vertices $(w,\overline{w})$ and $
(v,\overline{v})$ and  any realization has infinitely many
indices. Consider a realization $ (f,\overline{f})$ and its index~$i$.
WLOG $f=u_iu_{i+1} \ldots u_{\ell}$ and $w$ is a
prefix of $f$ and $v$ a suffix of $f$. Since ${\mathbf u}$ is
recurrent, we can find the smallest index $m>\ell$ such that $u' =
u_iu_{i+1} \ldots u_{\ell}\ldots u_{m}$ has a suffix
$\overline{w}$. According to Lemma  \ref{alternace}, $u'$ is a
palindrome. Therefore its suffix of length $|f|$ is exactly
$\overline{f}$. This means that the index $m-|f|+1$  is an
index of the same realization of a path between
$(w,\overline{w})$ and $ (v,\overline{v})$.  As the factor
$u_{i+1}\ldots u_{m-1}$ does not contain neither the factor $w$ nor
$\overline{w}$, no index $j$ strictly between $i$  and $m-|f|+1$
is an index of any realization of a path between
$(w,\overline{w})$ and $ (v,\overline{v})$.

 We have shown that between any pair of two consecutive indices of
 one specific
 realization  $(f, \overline{f})$ of a path between $(w,\overline{w})$ and $
 (v,\overline{v})$ there does not exist any index of any other realization  $(g,\overline{g})$
 of a path between $(w,\overline{w})$ and $
 (v,\overline{v})$. This means that there exists a~unique
 realization of a path between $(w,\overline{w})$ and $
 (v,\overline{v})$, which implies  that in the graph $G_n$ there
 exists a~unique path between vertices $(w,\overline{w})$ and $
 (v,\overline{v})$. Since this is true for all pairs of vertices of
 $G_n$, the graph $G_n$ after removing loops is a tree.\\

The condition~{\em 2.}: Let $w\in \mathcal{L}({\mathbf u})$ be a special factor (palindromic
or non-palindromic) with $|w| = n \geq H$. An $n$-simple path $f$
starting in $w$ and ending in $\overline{w}$  contains according
to its definition no other special vertex inside the path,  in
particular  $w$ and  $\overline{w}$ do not occur inside the path.
According to  Lemma \ref{alternace},  the path $f$ is a
palindrome.
\pfk

\pf[Proof of Theorem~\ref{PALaDefect}] It is a~direct consequence of Propositions~\ref{Pal_K}
and~\ref{H_Pal} and of Theorem~\ref{vsechno}, where the last statement is replaced with an equivalent one: There exists an integer $H$ such that for any factor $f$ of ${\mathbf u}$  with $|f| \geq H$
the longest palindromic  suffix  of $f$ is unioccurrent in $f$.
\pfk

\section{Morphisms of class $P_{ret}$}
In this section, we will define a~new class of morphisms and we will reveal their relation with well-known morphisms of class $P$ (defined in~\cite{HoKnSi}). We will show an important role these morphisms play in
the description of words with finite defect.

\begin{de} \label{def_Pret} We say that a morphism $\varphi : \mathcal{B}^*\mapsto  \mathcal{A}^*$  is of class $P_{ret}$  if there
exists a palindrome $p \in \mathcal{A}^*$ such that
\begin{itemize}
\item \label{def_Pret_1}  $\varphi(b)p$ is a palindrome for any $b\in \mathcal{B}$,
\item \label{def_Pret_2} $\varphi(b)p$ contains exactly $2$ occurrences of $p$, one as a prefix and one as a suffix, for any
$b\in \mathcal{B}$,
 \item \label{def_Pret_3} $\varphi(b)\neq \varphi(c)$ for all $b, c \in \mathcal{B},\
b\neq c$.
\end{itemize}
\end{de}

\begin{pozn}\label{PropPret}
The following properties of the morphisms of class $P_{ret}$
are easy to prove.
\begin{enumerate}
 \item  \label{vl_Pret_1} $\varphi(w) = \varphi(v)$, where $w,v \in \mathcal{B}^*$,
implies $w= v$, i.e., $\varphi$ is injective,
  \item \label{vl_Pret_2} $\overline{\varphi(x)p} = \varphi(\overline{x})p$
  for any $x\in \mathcal{B}^*$,
  \item \label{vl_Pret_3} $\varphi(s)p$ is a palindrome if and only if $s\in \mathcal{B}^*$ is a palindrome.
\end{enumerate}
Hint for the proof of the injectivity:
If $\varphi(w) = \varphi(v)$, then $\varphi(w)p = \varphi(v)p$. This implies $w=v$ by induction on $\max\{|w|, |v|\}$: the assertion is true for $\max\{|w|, |v|\}=1$ (i.e., $|w|=|v|=1$, since the morphism $\varphi$ is not erasing from the second point of Definition~\ref{def_Pret}) from the third point
of Definition~\ref{def_Pret}; the induction is then proven using the second point of Definition~\ref{def_Pret}.
\end{pozn}

Another class of morphisms closely related to defects is {\em standard (special) morphisms of class $P$} defined in~\cite{GlJuWiZa}. We will reveal their connection with $P_{ret}$ in Section~\ref{comments}.

\begin{prop}
The class $P_{ret}$ is closed under the composition of morphisms, i.e., for any $\varphi, \sigma \in P_{ret}$ we have $\varphi \sigma \in P_{ret}$
(if the composition is well defined).
\end{prop}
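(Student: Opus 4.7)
The natural candidate for the palindrome witnessing $\varphi \sigma \in P_{ret}$ is $r := \varphi(q)\, p$, where $p \in \mathcal{A}^*$ and $q \in \mathcal{B}^*$ are the palindromes associated with $\varphi$ and $\sigma$ respectively. Three of the four conditions to check are almost automatic. The word $r$ is a palindrome by Remark~\ref{PropPret}(3) applied to $\varphi$, since $q$ is a palindrome. For any $c \in \mathcal{C}$ one has
\[
\varphi\sigma(c)\, r \;=\; \varphi(\sigma(c))\, \varphi(q)\, p \;=\; \varphi(\sigma(c)\, q)\, p,
\]
which is a palindrome by another application of Remark~\ref{PropPret}(3) to $\varphi$, using that $\sigma(c)\, q$ is a palindrome by $\sigma \in P_{ret}$. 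Injectivity of $\varphi \sigma$ follows from two successive applications of Remark~\ref{PropPret}(1). Since $\varphi\sigma(c)\, r$ is a palindrome ending in the palindrome $r$, it also begins with $r$, so the prefix and suffix occurrences of $r$ are for free.

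The substantive step is to show that $r$ occurs in $\varphi\sigma(c)\, r$ exactly twice. My plan is to reduce this to counting occurrences of the shorter palindrome $p$ via the following auxiliary lemma: for every $w = b_1 b_2 \cdots b_n \in \mathcal{B}^*$, the occurrences of $p$ in $\varphi(w)\, p$ are exactly at the block boundaries $L_i := |\varphi(b_1 \cdots b_i)|$ for $i = 0, 1, \ldots, n$. I would prove this by induction on $n$; the cases $n=0, 1$ follow directly from Definition~\ref{def_Pret}. For the inductive step, decompose $\varphi(b_1 \cdots b_n)\, p = \varphi(b_1 \cdots b_{n-1}) \cdot \varphi(b_n)\, p$. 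An occurrence of $p$ starting at some $j \geq L_{n-1}$ lies in the final block $\varphi(b_n)\, p$, where Definition~\ref{def_Pret} forces $j \in \{L_{n-1}, L_n\}$. For $j < L_{n-1}$, the key observation is that $\varphi(b_n)\, p$ begins with $p$, so any subword of length at most $|p|$ straddling position $L_{n-1}$ agrees character-for-character with the corresponding subword of $\varphi(b_1 \cdots b_{n-1})\, p$; hence such an occurrence is equivalent to one in $\varphi(b_1 \cdots b_{n-1})\, p$, where the inductive hypothesis pins it down to some $L_i$ with $i \leq n-2$.

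With the lemma in hand, suppose $r$ occurs at position $j$ in $\varphi(\sigma(c)\, q)\, p$ and write $\sigma(c)\, q = b_1 b_2 \cdots b_m$. Because $r = \varphi(q)\, p$ both starts with $p$ and ends with $p$, both $j$ and $j + |\varphi(q)|$ are occurrences of $p$ in $\varphi(\sigma(c)\, q)\, p$; the lemma forces $j = L_i$ and $j + |\varphi(q)| = L_{i'}$ for some indices $i, i'$. The subword of length $|\varphi(q)|$ between these positions then equals both $\varphi(q)$ and $\varphi(b_{i+1} \cdots b_{i'})$, so injectivity of $\varphi$ (Remark~\ref{PropPret}(1)) gives $q = b_{i+1} \cdots b_{i'}$. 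Thus $q$ occurs in $\sigma(c)\, q$ at letter position $i$; since $\sigma \in P_{ret}$ permits exactly two such occurrences (prefix and suffix), there are exactly two admissible values of $j$, namely $L_0 = 0$ and $L_{|\sigma(c)|} = |\varphi\sigma(c)|$, as required.

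The hard part is the auxiliary lemma, and within it the straddling case of the induction; the small-but-clean observation making it go through is that $\varphi(b_n)\, p$ starts with $p$, which lets one transfer straddling occurrences back to $\varphi(b_1 \cdots b_{n-1})\, p$ without any further overlap analysis.
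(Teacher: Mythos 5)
Your proof is correct and follows essentially the same route as the paper: the same candidate palindrome $\varphi(p_\sigma)p_\varphi$, the same verification of palindromicity and injectivity via Remark~\ref{PropPret}, and the same reduction of the occurrence count for $\varphi\sigma$ to the occurrence count for $\sigma$. The only difference is that the paper merely asserts the one-to-one correspondence between occurrences of $\varphi(p_\sigma)p_\varphi$ in $\varphi(\sigma(b)p_\sigma)p_\varphi$ and occurrences of $p_\sigma$ in $\sigma(b)p_\sigma$, whereas you actually prove it via the block-boundary lemma --- a worthwhile piece of added rigor, but not a different approach.
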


\pf
Let $p_{\varphi}$ and $p_{\sigma}$ be the corresponding palindromes from the definition of $P_{ret}$ of the morphisms $\varphi$ and $\sigma$, respectively.
Then $p_{\varphi \sigma} := \varphi(p_{\sigma}) p_{\varphi}$ is a~palindrome by point {\em (3)} of Remark~\ref{PropPret} for $\varphi$.
It suffices to verify that $p_{\varphi \sigma}$ plays the role of the palindrome $p$ for the morphism $\varphi\sigma$.
\begin{itemize}
\item Take $b$ a letter. We have
$
\overline{  (\varphi \sigma) (b) p_{\varphi \sigma} }
=
\overline{ \varphi (\sigma(b) p_{\sigma} ) p_{\varphi}  }.
$
We obtain the following equalities using firstly point {\em (2)} of Remark~\ref{PropPret} for $\varphi$ and then for $\sigma$:
$$\overline{ \varphi (\sigma(b) p_{\sigma} ) p_{\varphi}  }=
\varphi ( \overline { \sigma(b) p_{\sigma}} ) p_{\varphi}
=
\varphi (\sigma(\overline{b}) p_{\sigma} ) p_{\varphi}
=
\varphi (\sigma(b) p_{\sigma} ) p_{\varphi}
=
(\varphi \sigma) (b)  p_{\varphi \sigma},
$$
i.e., $(\varphi \sigma) (b) p_{\varphi \sigma}$ is a~palindrome for all $b$.
\item Since $\varphi \in P_{ret}$, there is a~one-to-one correspondence between the occurrences of $p_{\varphi\sigma}=\varphi(p_{\sigma}) p_{\varphi}$ in $(\varphi\sigma)(b)p_{\varphi\sigma}=\varphi(\sigma(b)p_\sigma)p_\varphi$ and the occurrences of $p_\sigma$ in $\sigma(b)p_\sigma$.
    As $\sigma \in P_{ret}$, the word $\sigma(b)p_\sigma$ contains $p_\sigma$ only as a~prefix and as a~suffix. Therefore $\varphi(\sigma(b)p_\sigma)p_\varphi$
has only two occurrences of $\varphi(p_{\sigma}) p_{\varphi}$ - as a~prefix and as a~suffix.
\item
The injectivity of $\varphi$ and $\sigma$ clearly guarantees that $(\varphi\sigma)(b) \neq (\varphi\sigma)(c)$ for all $b \neq c$.
\end{itemize}

\pfk

In \cite{HoKnSi} another class of morphisms is defined.
We say that a~morphism $\varphi$ is of class $P$ if there exist a~palindrome $p$ and for every letter $a$ a~palindrome $q_a$ such that
$\varphi(a) = pq_a$.
The interest of the class $P$ has been awoken by the following question stated ibidem (however formulated in terms of dynamical systems):
``Given a~fixed point of a~primitive morphism $\varphi$ containing infinitely many palindromes, can we find a~primitive morphism $\sigma$ of class $P$
such that the factors of a~fixed point of $\sigma$ are the same?'' Let us recall that for any primitive morphism, the languages of all its fixed points are the same.
The previous question has been answered affirmatively in~\cite{BoTan} for morphisms defined on binary alphabets and in~\cite{AlBaCaDa} for periodic fixed points.

In order to reveal the relation between the classes $P$ and $P_{ret}$, we have to define the conjugation of a~morphism.
A~morphism $\sigma$ is said to be {\em conjugate} to a~morphism $\varphi$ defined on an alphabet $\mathcal A$ if there exists a~word $w \in {\mathcal A}^*$ such that
\begin{itemize}
\item
either for every letter $a \in {\mathcal A}$, the image $\varphi(a)$ has $w$ as its prefix and the image $\sigma(a)$ is obtained from $\varphi(a)$ by erasing $w$ from the beginning and adding $w$ to the end; we write
$\sigma(a)=w^{-1}\varphi(a)w$,
\item or for every letter $a \in {\mathcal A}$, the image $\varphi(a)$ has $w$ as its suffix and the image $\sigma(a)$ is obtained from $\varphi(a)$ by erasing $w$ from the end and adding $w$ to the beginning; we write
$\sigma(a)=w\varphi(a)w^{-1}$.
\end{itemize}
\begin{prop}
If $\varphi$ is a~morphism of class $P_{ret}$, then $\varphi$ is conjugate to a~morphism of class~$P$.
\end{prop}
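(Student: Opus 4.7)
The plan is to produce an explicit conjugating word $w$ so that $\sigma = w^{-1}\varphi w$ satisfies the class $P$ condition $\sigma(b) = p' q_b$ with $p'$ a fixed palindrome and each $q_b$ a palindrome. The main tool is the word equation $\varphi(b)\,p = p\,\overline{\varphi(b)}$, which is equivalent (given that $p$ itself is a palindrome) to $\varphi(b)p$ being a palindrome; it is an immediate consequence of property $(2)$ in Remark~\ref{PropPret}.

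First I would analyse the structure of the images of $\varphi$. Since $\varphi(b)p$ begins with $p$, if $|\varphi(b)|\geq|p|$ then $p$ is a prefix of $\varphi(b)$; writing $\varphi(b) = p M_b$ and substituting into the word equation forces $M_b = \overline{M_b}$, so $\varphi(b)$ is already in class $P$ form with prefix palindrome $p$. If every letter satisfies $|\varphi(b)|\geq|p|$, then $\varphi$ itself is of class $P$ and the conjugating word $w = \epsilon$ finishes the argument. Otherwise the word equation combined with the ``exactly two occurrences of $p$'' condition forces every short image $\varphi(b)$ (with $|\varphi(b)|<|p|$) to coincide with the prefix of $p$ of length $|\varphi(b)|$, and that length is a period of $p$. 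Let $d$ denote the minimum such period. Combining $p = \overline{p}$ with periodicity $d$ yields, by a short induction on $|p|$, a palindromic factorisation $p = (\pi_1\pi_2)^q\pi_1$ with $\pi_1,\pi_2$ palindromes and $|\pi_1|+|\pi_2| = d$.

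Next I would take the conjugating word $w$ to be a palindromic prefix of $p$ built from $\pi_1$ and $\pi_2$, for instance $w = \pi_1$ when $q = 1$. Since every short $\varphi(b)$ is a prefix of $p$ of length at least $d$ and every long $\varphi(b)$ begins with $p$, any prefix of $p$ of length at most $d$ is a prefix of every $\varphi(b)$, so $\sigma = w^{-1}\varphi w$ is well defined. A direct calculation using $p = w\,(w^{-1}p)$, the palindromicity of $\pi_1, \pi_2, M_b$, and the identity $\overline{XY} = \overline{Y}\,\overline{X}$ then shows that $\sigma(b)$ decomposes uniformly as $p' q_b$: for long $b$ the palindrome $q_b$ comes out in the form $\pi_1 M_b \pi_1$ (or a closely related palindromic sandwich), and for short $b$ it is a cyclic rotation of a piece of $p$ which is palindromic by the period structure.

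The main obstacle I expect is achieving a uniform class $P$ decomposition, with the same $p'$ for every letter, when $q$ is large: a single conjugation by a prefix of $\pi_1\pi_2$ need not directly yield class $P$ form. This can be handled either by choosing a longer (but still palindromic) prefix of $p$ as $w$, or by iterating the conjugation, using that conjugacy is a transitive relation on morphisms and that each intermediate morphism remains in $P_{ret}$. The ``exactly two occurrences of $p$'' hypothesis rules out the periodic overlaps that would otherwise obstruct this iteration, and the palindromicity of $\pi_1$ and $\pi_2$ guarantees that after finitely many steps the resulting morphism lies in class $P$.
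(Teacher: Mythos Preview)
Your approach takes a substantial detour and leaves a real gap at the end. The case split on $|\varphi(b)|\geq|p|$ versus $|\varphi(b)|<|p|$, together with the period analysis and the palindromic factorisation $p=(\pi_1\pi_2)^q\pi_1$, are not needed for the result, and the closing paragraph is precisely where the argument breaks down. You propose to iterate the conjugation and claim that ``each intermediate morphism remains in $P_{ret}$'', but you never verify this, and in fact it is false in general: conjugating a $P_{ret}$ morphism by a prefix of $p$ typically destroys the $P_{ret}$ structure (the palindrome $p$ has no reason to survive, and a new one need not exist). Without that stability you have no induction hypothesis, and you also give no termination argument. The ``choose a longer palindromic prefix of $p$'' alternative is equally unspecified.

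The paper's proof avoids all of this with a single, case-free conjugation. Write $p = qx\overline{q}$ with $x$ a letter or empty (so $q$ is the prefix of $p$ of length $\lfloor |p|/2\rfloor$), and set $\sigma(a)=q^{-1}\varphi(a)q$. Since $p=qx\overline{q}$ is a prefix of $\varphi(a)p$, the word $qx$ is a prefix of $\varphi(a)q$, so $\sigma(a)=xy_a$ for some $y_a$. Since $\varphi(a)p = q\,\sigma(a)\,x\,\overline{q}$ is a palindrome, so is its central block $\sigma(a)x = xy_ax$, hence $y_a$ is a palindrome. Thus $\sigma$ is already of class $P$ with the (at most one-letter) palindrome $x$ as the common prefix. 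No short/long dichotomy, no period structure, no iteration: the right conjugating word is simply the first half of $p$.
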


\pf
Let $\varphi \in P_{ret}$ and let $p$ have the same meaning as in the definition of $P_{ret}$.
We will write $p = qx\overline{q}$, where $q \in \mathcal{A}^*$ and $x$ is either the empty word or a letter.
Denote by $\sigma$ a~morphism defined for all letters $a$ as $\sigma(a) = q^{-1}\varphi(a)q$.
Thus, $\varphi$ is conjugate to $\sigma$.

The word $q^{-1}\varphi(a)q$ can be written as $xy_a$ since $qx$ is a prefix of $\varphi(a)q$.
Since $\varphi(a)qx\overline{q}$ is a palindrome, $q^{-1}\varphi(a)qx\overline{q}\ {\overline{q}}^{-1} = xy_ax$ is a palindrome too.
Therefore $y_a$ is a~palindrome and $\sigma$ is of class $P$.
\pfk

The implication cannot be reversed.
Consider the alphabet $\{a,b\}$ and let $\varphi(a) = aa$ and $\varphi(b) = ab$.
It is clear that $\varphi \in P$ (for $p=a$), but $\varphi$ is not conjugate to any morphism of class $P_{ret}$ ($aaa$ is not a~complete return word of $a$).
\vspace{0.3cm}

The following theorem   shows the importance of morphisms of class
$P_{ret}$ for uniformly recurrent words with finite defect.

\begin{thm}\label{obraz} Let ${\mathbf u}\in \mathcal{A}^{\mathbb{N}}$
be a~uniformly recurrent word with finite defect.
Then there exist a~rich word ${\mathbf v} \in \mathcal{B}^{\mathbb{N}}$
and a~morphism $\varphi: {\mathcal B}^* \mapsto {\mathcal A}^*$ of class $P_{ret}$ such that

$$  {\mathbf u} = \varphi({\mathbf v}).$$
The word  ${\mathbf v}$ is
uniformly recurrent.
\end{thm}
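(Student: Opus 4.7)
The plan is to take a palindrome $p$ in $\mathcal{L}(\mathbf{u})$ sufficiently long, form an alphabet $\mathcal{B}$ indexed by its complete return words in $\mathbf{u}$, and let $\varphi$ send each letter of $\mathcal{B}$ to the corresponding return word with its trailing copy of $p$ removed. By item~3 of Theorem~\ref{vsechno}, $D(\mathbf{u})<\infty$ furnishes an integer $K$ such that every complete return word of a~palindromic factor of $\mathbf{u}$ of length at least $K$ is a palindrome. I will take $p$ to be a palindromic prefix of $\mathbf{u}$ with $|p|\geq K$; for uniformly recurrent words with finite defect, existence of such a~$p$ is a preparatory point that can be deduced from Theorem~\ref{vsechno}(4): no fixed short palindrome can be the unioccurrent longest palindromic suffix of more than one long enough prefix of $\mathbf{u}$, so by pigeonhole the lengths of these LPS have to grow without bound.

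Construction of $\varphi$ and $\mathbf{v}$: let $V=\{v^{(1)},\dots,v^{(r)}\}$ be the set of complete return words of $p$ in $\mathbf{u}$, finite by uniform recurrence. By the choice of $K$ every $v^{(i)}$ is a palindrome, and since $p$ occurs in $v^{(i)}$ exactly twice (as prefix and as suffix), one can write $v^{(i)}=p\,m^{(i)}\,p$. Set $\mathcal{B}=\{b_1,\dots,b_r\}$ and define $\varphi(b_i):=v^{(i)}p^{-1}=p\,m^{(i)}$. The three conditions of Definition~\ref{def_Pret} are then immediate: $\varphi(b_i)p=v^{(i)}$ is a palindrome, contains $p$ only as its prefix and as its suffix by the very definition of a complete return word, and the mapping $v\mapsto vp^{-1}$ is injective, so $\varphi(b_i)\neq\varphi(b_j)$ for $i\neq j$. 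Writing the positions of $p$ in $\mathbf{u}$ as $0=j_0<j_1<j_2<\cdots$ and denoting by $b_{i_k}$ the letter coding the $k$-th complete return word, the telescoping $\varphi(b_{i_1})\varphi(b_{i_2})\cdots=(p\,m^{(i_1)})(p\,m^{(i_2)})\cdots$ reproduces $\mathbf{u}$ exactly, hence $\mathbf{u}=\varphi(\mathbf{v})$ with $\mathbf{v}:=b_{i_1}b_{i_2}b_{i_3}\cdots$. Uniform recurrence of $\mathbf{v}$ is inherited from $\mathbf{u}$: any factor $w$ of $\mathbf{v}$ is coded by an image $\varphi(w)$ anchored at $p$-positions of $\mathbf{u}$, and uniformly bounded gaps of $\varphi(w)$ in $\mathbf{u}$ translate into uniformly bounded gaps of $w$ in $\mathbf{v}$.

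The main obstacle is proving the richness of $\mathbf{v}$. The strategy is to verify the criterion of Proposition~\ref{returnrich}: every complete return word in $\mathbf{v}$ of any palindrome is itself a palindrome. The bridge is Remark~\ref{PropPret}(3), which says that for $s\in\mathcal{B}^*$ the word $s$ is a palindrome if and only if $\varphi(s)p$ is one, combined with the marker property of $p$: by construction the occurrences of $p$ in $\mathbf{u}=\varphi(\mathbf{v})$ are precisely the block boundaries $j_k$, so the occurrences of any factor $s$ of $\mathbf{v}$ are in bijection with the occurrences of $\varphi(s)p$ in $\mathbf{u}$. Under this bijection a complete return word $t$ of $s$ in $\mathbf{v}$ corresponds to a complete return word $\varphi(t)p$ of $\varphi(s)p$ in $\mathbf{u}$. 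When $s$ is a palindrome, $\varphi(s)p$ is too, and its length is at least $|p|\geq K$; Theorem~\ref{vsechno}(3) then forces $\varphi(t)p$ to be a palindrome, and a final application of Remark~\ref{PropPret}(3) to $t$ yields that $t$ itself is a palindrome. Proposition~\ref{returnrich} then gives richness of $\mathbf{v}$, completing the proof.
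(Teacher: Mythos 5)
Your construction and your richness argument coincide with the paper's: code $\mathbf{u}$ by the return words of a long palindromic prefix $p$, check the three axioms of Definition~\ref{def_Pret}, and use the correspondence $t \mapsto \varphi(t)p$ between complete return words in $\mathbf{v}$ and complete return words of long palindromes in $\mathbf{u}$, together with points \emph{(2)}--\emph{(3)} of Remark~\ref{PropPret} and item~3 of Theorem~\ref{vsechno}, to verify the criterion of Proposition~\ref{returnrich}. That part is sound.

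The gap is in your preparatory point, the existence of a palindromic \emph{prefix} $p$ of $\mathbf{u}$ with $|p|\geq K$. Your pigeonhole argument shows that the longest palindromic suffixes of distinct long prefixes of $\mathbf{u}$ are pairwise distinct and hence grow in length; but $lps(w)$ for a prefix $w$ is a suffix of $w$, so this only yields arbitrarily long palindromic \emph{factors} of $\mathbf{u}$, not palindromic prefixes. And you genuinely need a prefix: otherwise $\mathbf{u}$ decomposes only as $s\,\varphi(\mathbf{v})$ with a nonempty leftover $s$ before the first occurrence of $p$, and the asserted equality $\mathbf{u}=\varphi(\mathbf{v})$ fails. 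The paper closes this step with a reversal trick: since $\mathbf{u}$ is uniformly recurrent with finite defect, it contains infinitely many palindromes, hence $\mathcal{L}(\mathbf{u})$ is closed under reversal; take a prefix $z$ with $|z|>\max\{2R_{\mathbf u}(K),H\}$, so that $\overline{z}\in\mathcal{L}(\mathbf{u})$, its longest palindromic suffix is unioccurrent in $\overline{z}$, and --- because every factor of length at most $K$ occurs at least twice in a window of length $2R_{\mathbf u}(K)$ --- satisfies $|lps(\overline{z})|>K$. Being a palindromic suffix of $\overline{z}$, the word $lps(\overline{z})$ is a palindromic prefix of $z$, hence of $\mathbf{u}$; with $p:=lps(\overline{z})$ the rest of your proof goes through. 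A minor cosmetic point: the two occurrences of $p$ in a complete return word may overlap, so one cannot always write $v^{(i)}=p\,m^{(i)}p$; but $\varphi(b_i):=v^{(i)}p^{-1}$ is well defined regardless, and nothing else in your argument uses that factorization.
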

\pf
Consider a~prefix $z$ of ${\mathbf u}$ of length $|z| > \max\{2R_{{\mathbf
 u}}(K), H\}$, where $K$ is the constant from Theorem~\ref{vsechno} and $H$ is an integer such that any factor of ${\mathbf u}$ of length $\geq H$ has its longest palindromic suffix unioccurrent. (Let us recall that the existence of $H$ is also guaranteed by Theorem~\ref{vsechno}.)
Since the language of ${\mathbf u}$ is closed under
reversal (this follows from the fact that $\mathbf u$ is uniformly recurrent
and contains infinitely many palindromes),
$\overline{z}$ is a~factor of ${\mathbf u}$ as well and its
$lps(\overline{z})$ has a~unique occurrence in $\overline{z}$.
As $|\overline{z}| > 2R_{{\mathbf u}}(K)$ any factor shorter than or equal to $K$
occurs in $\overline{z}$ at least twice. Therefore, $|lps(\overline{z})|> K$.
Hence, $lps(\overline{z})$ is a~palindromic prefix of ${\mathbf u}$ of length greater than $K$.

Denote $p:=lps(\overline{z})$. Since ${\mathbf u}$ is uniformly recurrent, the set
of return words of $p$ is finite, say  $ q_0, q_1, \ldots, q_{m-1}$ is the list of
all different return words. Let us define a~morphism $\varphi$ on the alphabet $\mathcal{B} =
\{0,1,\ldots, {m-1}\}$ by $\varphi(b) = q_b$ for all $b \in
\mathcal{B}$.  It is obvious that the morphism belongs to the
class $P_{ret}$.
Then we can write ${\mathbf u} = q_{i_0}q_{i_1}q_{i_2}\ldots $ for some sequence $(i_n)_{n\in \mathbb{N}} \in
 \mathcal{B}^{\mathbb{N}}$. Let us put ${\mathbf v} = (i_n)_{n\in
 \mathbb{N}}$.

We will  show  that any complete return word of any palindrome in the word
${\mathbf v}$ is a~palindrome as well.
According to \Cref{returnrich} this implies the richness of ${\mathbf v}$.

Let $s$ be a~palindrome in ${\mathbf v}$ and $x$ its complete return
word. Then $\varphi(x)p$ has precisely two occurrences of the
factor $\varphi(s)p$. As  $s$ is a~palindrome, $\varphi(s)p$ is a~palindrome as well
of length  $|\varphi(s)p| \geq |p|
> K$. Therefore $\varphi(x)p$ is a~complete return word of a
long enough palindrome and according to our assumption
$\varphi(x)p$ is a~palindrome as well. This together with point {\em (3)}
in Remark~\ref{PropPret} implies
$$ \varphi(x)p = \overline{\varphi(x)p} = \varphi(\overline{x})p.$$
The point {\em (2)} then  gives $x=\overline{x}$ as we claimed.\\
The uniform recurrence of ${\mathbf v}$ is obvious.
\pfk

The reverse implication does not hold, i.e.,
the set of uniformly recurrent words with finite defect is not closed under morphisms of class $P_{ret}$.
Let us provide a~construction of such a~word.

\begin{ex}
\label{ex_rich_na_nekonecno}

Let $v_0 = \epsilon$. For $i > 0$ set $v_i = \left( v_{i-1}0v_{i-1}1v_{i-1}1v_{i-1}0v_{i-1}2v_{i-1}2 \right)^{(+)}$,
where $w^{(+)}$ denotes the shortest palindrome having $w$ as a~prefix.

Note that $v_{i-1}$ is a~prefix of $v_i$ for all $i$.
Thus we can set $\displaystyle \mathbf{v} = \lim_{i \to \infty} v_i$ and $\mathbf v$ is uniformly recurrent by construction.

Denote by $\varphi$ a morphism from $P_{ret}$ defined by
$$
\varphi: \left \{ \begin{array}{l} 0 \mapsto 0100 \\ 1 \mapsto 01011 \\ 2 \mapsto 010111 \end{array} \right..
$$

As we will show in the sequel, the word $\mathbf{v}$ is rich and the defect $D(\varphi(\mathbf{v})) = \infty$.

\end{ex}

\begin{lem}
\label{v_i_rich}
For all $i$ the palindrome $v_i$ from Example~\ref{ex_rich_na_nekonecno} is rich.
\end{lem}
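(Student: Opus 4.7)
The plan is to proceed by induction on $i$, simultaneously proving that $v_i$ is a palindrome and is rich. The base case $i = 0$ is immediate since $v_0 = \epsilon$ has exactly $|v_0|+1 = 1$ palindromic factor. For the inductive step, assume $v_{i-1}$ is rich and a palindrome.

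The first key step is to derive a closed form for $v_i$. Setting
\[
X_i := v_{i-1}\,0\,v_{i-1}\,1\,v_{i-1}\,1\,v_{i-1}\,0\,v_{i-1},
\]
the palindromicity of $v_{i-1}$ combined with the fact that the letter-word $0110$ is a palindrome makes $X_i$ itself a palindrome. I would then show that the longest palindromic suffix of the inner word $X_i \cdot 2\,v_{i-1}\,2$ is $2\,v_{i-1}\,2$: although the letter $2$ may occur many times inside each $v_{i-1}$-block, any candidate palindromic suffix of length strictly greater than $|v_{i-1}|+2$ must start with a $2$ whose immediate right neighbour is the last letter of $v_{i-1}$ (namely $0$); together with the rigidity forced by the palindromicity of $v_{i-1}$, this rules out every starting position other than the explicit left $2$. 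Applying the formula $w^{(+)} = w\overline{u}$ (where $w = u\cdot\operatorname{lps}(w)$) yields
\[
v_i \;=\; X_i \cdot 2\,v_{i-1}\,2 \cdot X_i,
\]
which is in particular a palindrome, completing the palindromicity side of the induction.

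To prove richness of $v_i$, I would apply Proposition~\ref{lps} and verify that every prefix $p$ of $v_i$ has its longest palindromic suffix unioccurrent in $p$. Prefixes of length at most $|v_{i-1}|$ are handled immediately by the inductive hypothesis. For the remaining prefixes I would use the block decomposition
\[
v_i = v_{i-1}\,\ell_1\,v_{i-1}\,\ell_2\,v_{i-1}\,\cdots\,\ell_{10}\,v_{i-1}, \qquad \ell_1\ell_2\cdots\ell_{10} = 0110220110,
\]
and classify prefixes by the position at which they end. For each such $p$, the longest palindromic suffix is either \emph{(a)} an internal palindrome of the last complete $v_{i-1}$-block -- hence unioccurrent by induction -- or \emph{(b)} a palindrome straddling one or several separators and centered symmetrically around one of them; in case \emph{(b)} the candidate is forced to be unioccurrent because any reoccurrence would require another separator of matching type at an incompatible position, contradicted by the palindromic pattern $0110220110$ of separators. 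For prefixes lying in the second half of $v_i$ (past the central $2\,v_{i-1}\,2$) I would exploit the palindromic symmetry $v_i = \overline{v_i}$ to mirror the analysis onto the first half.

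The main technical obstacle will be the bookkeeping for the boundary cases, namely prefixes ending at or just past a separator $\ell_k$. In each such case one must simultaneously exclude a longer palindromic suffix and verify uniqueness of the candidate centered palindrome. This relies on two combined ingredients: the inductive richness of $v_{i-1}$, which controls the internal palindromic structure of each block, and the specific separator pattern $0110220110$, which prevents accidental palindromic extensions or reoccurrences across distinct block boundaries.
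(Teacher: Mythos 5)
Your overall strategy differs from the paper's: the paper proves by induction the explicit factorization of $v_{i+1}$ together with the occurrence invariant ``$v_ixv_i$ contains exactly two occurrences of $v_i$ and one occurrence of $0v_{i-1}xv_{i-1}0$'', then counts \emph{all} palindromic factors of the pre-closure word $w=v_i0v_i1v_i1v_i0v_i2v_i2$ (Table~\ref{ta_ex_pali}), obtains $|w|+1$ of them, and concludes via the fact that the right palindromic closure preserves richness. You instead propose to verify, prefix by prefix of $v_i$, that the longest palindromic suffix is unioccurrent (Proposition~\ref{lps}). That route is legitimate in principle, but as written it has a genuine logical gap.

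The gap is your case \emph{(a)}. If $p$ is a prefix of $v_i$ that contains at least one complete earlier $v_{i-1}$-block and $lps(p)$ is internal to the last (possibly partial) block, then the inductive richness of $v_{i-1}$ gives unioccurrence of $lps(p)$ \emph{inside that one block only}; since every block is the same word $v_{i-1}$, the same palindrome occurs in each earlier complete block as well, so it is \emph{not} unioccurrent in $p$. So case \emph{(a)} cannot be ``handled by induction'' --- rather, one must prove that for every prefix longer than $|v_{i-1}|$ this case simply does not occur, i.e.\ that the longest palindromic suffix always reaches back across at least one separator. (Your own example $p=v_{i-1}0\,0$, with $lps(p)=000$ straddling the separator, illustrates why.) Both this step and the subsequent uniqueness of the straddling palindrome reduce to controlling the occurrences of $v_{i-1}$ inside words of the form $v_{i-1}xv_{i-1}$; the phrase ``rigidity forced by the palindromicity of $v_{i-1}$'' does not supply this, because consecutive blocks are identical and could a priori overlap or recur at unexpected positions. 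The same issue already affects your computation of $lps\bigl(X_i\cdot 2v_{i-1}2\bigr)$: to exclude a second occurrence of $2v_{i-1}2$ you need to know \emph{all} occurrences of $v_{i-1}$ in $X_i\cdot2v_{i-1}2$, not just those of the letter $2$. The fix is to strengthen the induction hypothesis exactly as the paper does, carrying along the statement that $v_{i-1}xv_{i-1}$ contains precisely two occurrences of $v_{i-1}$ (and one of $0v_{i-2}xv_{i-2}0$) for every letter $x$; without some such invariant the case analysis cannot be closed.
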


\pf

We will show for all $i$ that $v_i$ is rich and
$$
v_{i} = v_{i-1}0v_{i-1}1v_{i-1}1v_{i-1}0v_{i-1}2v_{i-1}2v_{i-1}0v_{i-1}1v_{i-1}1v_{i-1}0v_{i-1}.
$$
Furthermore, we will show that for all letters $x$, the word $v_ixv_i$ contains exactly $2$ occurrences of $v_i$
and $1$ occurrence of $0v_{i-1}xv_{i-1}0$.

We will proceed by induction on $i$.
For $i = 1$ and $2$ it is left up to the reader to verify the proposition.

Suppose the fact holds for $i$, $i \geq 2$.
We will show the claim for $i+1$.
Denote by $w$ the factor $$w := v_{i}0v_{i}1v_{i}1v_{i}0v_{i}2v_{i}2.$$
Note that since $v_{i}xv_{i}$ contains exactly $2$ occurrences of $v_{i}$ for all letters $x$,
the factor $w$ contains exactly $6$ occurrences of $v_{i}$.
In other words, if we find $1$ occurrence of $v_{i}$, we know all the other occurrences.

\begin{table}
\begin{tabular}{lrclc}
\# & \multicolumn{3}{c}{palindromic factors of $w$} & count \\ \hline
1 & \multicolumn{3}{c}{palindromic factors of $v_i$} & $|v_i| + 1$ \\
2 &$0v_{i-1}0,$ &$\ldots$& $,v_{i-1}0v_{i-1}0v_{i-1}$ & $|v_{i-1}|+1$ \\
3 &$1v_{i-1}0v_{i-1}1,$ &$\ldots$& $,v_{i-1}1v_{i-1}0v_{i-1}1v_{i-1}$ & $|v_{i-1}|+1$ \\
4 &$2v_{i-1}0v_{i-1}1v_{i-1}1v_{i-1}0v_{i-1}2,$ &$\ldots$& $,v_{i-1}2v_{i-1}0v_{i-1}1v_{i-1}1v_{i-1}0v_{i-1}2v_{i-1}$ & $|v_{i-1}|+1$ \\
5 &$0v_{i-1}0v_{i-1}0,$ &$\ldots$& $,v_i0v_i$ & $|v_i| - |v_{i-1}| $ \\
6 &$0v_{i-1}1v_{i-1}0,$ &$\ldots$& $,v_i1v_i$ & $|v_i| - |v_{i-1}| $ \\
7 &$0v_{i-1}2v_{i-1}0,$ &$\ldots$& $,v_i2v_i$ & $|v_i| - |v_{i-1}| $ \\
8 &$1v_i1,$ &$\ldots$& $,v_i0v_i1v_i1v_i0v_i$ & $2|v_i| + 2$ \\
9 && $2v_i2$ & & 1 \\ \hline \hline
\multicolumn{4}{l}{\textbf{total}} & $6|v_i| + 7$
\end{tabular}
\caption{Enumeration of palindromic factors of $w$.}
\label{ta_ex_pali}
\end{table}

In \Cref{ta_ex_pali} we can see the total number of palindromic factors of $w$.
Let us give a brief explanation for rows which may not be clear at first sight.
Let us recall that by the induction assumption
$$
v_{i} = v_{i-1}0v_{i-1}1v_{i-1}1v_{i-1}0v_{i-1}2v_{i-1}2v_{i-1}0v_{i-1}1v_{i-1}1v_{i-1}0v_{i-1}.
$$
Since there are exactly $11$ occurrences of $v_{i-1}$ in $v_{i}$, one can easily see that factors in rows $2$, $3$, and $4$ have not been counted in row $1$.
Rows $5$, $6$, and $7$ exploit the fact that for all letters $x$ $v_ixv_i$ contains $1$ occurrence of $0v_{i-1}xv_{i-1}0$.
One can see that the total number of palindromic factors is $6|v_i|+7=|w|+1$, therefore $w$ is rich from the definition.

As the right palindromic closure preserves the richness, we can see that $v_{i+1}$ is rich.
Moreover, since there are exactly $2$ occurrences of $v_i$ in $v_ixv_i$ for all letters $x$,
one can see that the closure will produce the following palindrome
$$
v_{i+1} = v_{i}0v_{i}1v_{i}1v_{i}0v_{i}2v_{i}2v_{i}0v_{i}1v_{i}1v_{i}0v_{i}.
$$
Take a letter $x$.
We can now rewrite $v_{i+1}xv_{i+1}$ in terms of $v_{i}$ and see
the factor $0v_{i}xv_{i}0$ occurs once and $v_{i+1}$ occurs twice
again arguing by the known count of factors $v_{i}$.

\pfk

\begin{prop}
The infinite word $\mathbf{v}$ defined in \Cref{ex_rich_na_nekonecno} is rich
and $D(\varphi(\mathbf{v})) = \infty$, where $\varphi$ is also defined in \Cref{ex_rich_na_nekonecno}.
\end{prop}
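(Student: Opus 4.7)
The plan is to prove the two assertions in turn.

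For the richness of $\mathbf v$, I will use \Cref{v_i_rich}: each finite palindrome $v_i$ is rich. Since the recursive construction makes $v_{i-1}$ a prefix of $v_i$, each $v_i$ is a prefix of $\mathbf v$. Any finite prefix of $\mathbf v$ is thus a prefix---and hence a factor---of some $v_j$; since factors of rich words are rich, every finite prefix of $\mathbf v$ is rich, and $\mathbf v$ is rich by definition.

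For the infinite defect of $\mathbf u := \varphi(\mathbf v)$, I will use condition $(3)$ of \Cref{vsechno}: it suffices to show that for every integer $K$, some palindromic factor of $\mathbf u$ of length at least $K$ has a complete return word that is not a palindrome. The first step is to rule out the ``easy'' candidates: if $Q = \varphi(w)p$ for a palindrome $w$ of $\mathbf v$, then---as in the proof of \Cref{obraz}---both copies of $p$ in $Q$ must align with letter boundaries of $\mathbf u$ (since $p$ occurs in $\mathbf u$ only at starts of letter images and $\varphi$ is injective), so occurrences of $Q$ in $\mathbf u$ are in bijection with occurrences of $w$ in $\mathbf v$, and the complete return word of $Q$ in $\mathbf u$ takes the form $\varphi(r)p$, where $r$ is a complete return word of $w$ in $\mathbf v$. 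By \Cref{returnrich} together with Remark~\ref{PropPret}(3), $\varphi(r)p$ is a palindrome, since $\mathbf v$ is rich.

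Consequently, the palindromes responsible for infinite defect must be palindromic factors of $\mathbf u$ that are \emph{not} of the form $\varphi(w)p$. Such factors exist precisely because $\varphi$ is length-non-uniform: the images $\varphi(1)=01011$ and $\varphi(2)=010111$ differ in length ($5$ versus $6$) but share the long common prefix $01011$. Exploiting this, for each $i$ I will exhibit an explicit palindrome $Q_i$ of $\mathbf u$ with $|Q_i|\geq i$, lying partly inside $\varphi(v_i)$ and straddling letter boundaries asymmetrically, whose two consecutive occurrences in $\mathbf u$ are separated by a mixture of $\varphi(1)$- and $\varphi(2)$-blocks arising from the specific positions of the letters $1$ and $2$ in the decomposition of $v_{i+1}$. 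Because of the length difference $|\varphi(2)|-|\varphi(1)|=1$, the word between the two occurrences is not symmetric, and the resulting complete return word fails to be a palindrome.

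The main obstacle is the explicit construction of the family $\{Q_i\}$ and the verification that their complete return words in $\mathbf u$ are non-palindromic. This requires a careful combinatorial analysis of how occurrences of $\varphi(1)$ and $\varphi(2)$ interact around $\varphi(v_i)$ in $\mathbf u$, guided by the specific gap pattern $0110220110$ that governs the decomposition of $v_{i+1}$ (as recorded in \Cref{v_i_rich}). Once the family $\{Q_i\}$ with $|Q_i|\to\infty$ is produced, no finite $K$ satisfies condition $(3)$ of \Cref{vsechno}, and therefore $D(\mathbf u)=\infty$.
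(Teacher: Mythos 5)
Your first half (richness of $\mathbf v$) is correct and coincides with the paper's argument: $\mathbf v$ is the increasing limit of the rich palindromes $v_i$ of \Cref{v_i_rich}, so every prefix is rich. Your preliminary observation that palindromes of the form $\varphi(w)p$ (with $p=010$) only have palindromic complete return words in $\varphi(\mathbf v)$ is also essentially right (it mirrors, in reverse, the synchronization argument of \Cref{obraz}), but it is logically superfluous: to get $D(\varphi(\mathbf v))=\infty$ you only need to produce arbitrarily long palindromic factors with non-palindromic complete return words, and ruling out one family of candidates contributes nothing to that.

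The genuine gap is that the second half is a plan, not a proof: you announce a family $\{Q_i\}$ of palindromes straddling letter boundaries and assert that their complete return words are non-palindromic, but you explicitly defer ``the explicit construction of the family $\{Q_i\}$ and the verification,'' which is exactly the content of the paper's proof. Concretely, the paper takes $o_i:=1\varphi(v_i)p1$ (a palindrome by \Cref{PropPret}(3), visible inside $\varphi(1v_i1)$ and $\varphi(2v_i2)$), checks inside $\varphi(r_i)$ with $r_i=1v_i1v_i0v_i2v_i2$ that the intermediate copies of $v_i$ (those flanked by $0$, whose images give $\ldots\varphi(v_i)p0\ldots$ and $\ldots0\varphi(v_i)p\ldots$) do not produce $o_i$, so the complete return word is $O_i=1\varphi(v_i1v_i0v_i2v_i)p1$, and this is non-palindromic by \Cref{PropPret} because $v_i1v_i0v_i2v_i$ is not a palindrome; infinitely many oddities $\{O_i,\overline{O_i}\}$ then give infinite defect via \Cref{oddities_defect} (your route via condition 3 of \Cref{vsechno} would also work once uniform recurrence and infinitude of palindromes of $\varphi(\mathbf v)$ are noted). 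Moreover, your stated mechanism --- that non-palindromicity of the return word comes from the length difference $|\varphi(2)|-|\varphi(1)|=1$ --- is not the actual reason and would not by itself yield a proof: the asymmetry comes from the arrangement of the separating letters $1,\ldots,0,\ldots,2$ in the decomposition of $v_{i+1}$ (so the word between consecutive occurrences reads differently forwards and backwards), together with the verification that $o_i$ is skipped at the intermediate occurrences of $v_i$. Without exhibiting the $Q_i$ and carrying out these checks, the claim $D(\varphi(\mathbf v))=\infty$ remains unproved.
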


\pf

Directly from the definition of $\mathbf{v}$, one can see using the previous lemma that all its prefixes $v_i$ are rich
and therefore $\mathbf{v}$ is rich.

Denote by $p$ the palindrome from the definition of $P_{ret}$ for the substitution $\varphi$.
One can see that $p = 010$.
Take $1v_i1$, a factor of $\mathbf{v}$.
We have $\varphi(1v_i1) = 01011\varphi(v_i)p11$, a factor of $\varphi(\mathbf{v})$.
Using point {\em (3)} of Remark~\ref{PropPret}, we can see that $o_i := 1\varphi(v_i)p1$ is a~palindrome.
Now take $2v_i2$.
One can see that $\varphi(2v_i2) = 010111\varphi(v_i)p111$.
Note again the palindromic factor $o_i$.

We will now look for complete return words of $o_i$ in $\varphi(r_i)$, where
$$
r_i = 1v_i1v_i0v_i2v_i2.
$$
The word $r_i$ is clearly a~factor of $v_{i+1}$, therefore a factor of $\mathbf{v}$.
The first occurrence of $o_i$ is produced by the factor $1v_i1$ in $r_i$.
Since $\varphi$ is injective, we need to look only at occurrences of $v_i$ in $r_i$.
The next two occurrences are in the factors $1v_i0$ and $0v_i2$.
One can see that $\varphi(1v_i0) = 01011\varphi(v_i)p0$ and $\varphi(0v_i1) = 0100\varphi(v_i)p11$, i.e., the factor $o_i$
does not occur in $\varphi(r_i)$ until the factor $\varphi(2v_i2)$ occurs.
The complete return word of $o_i$ is then $O_i := 1\varphi(v_i1v_i0v_i2v_i)p1$.
By point {\em (3)} of Remark~\ref{PropPret}, as $v_i1v_i0v_i2v_i$ is not a~palindrome, neither is the complete return word $O_i$.
Therefore for each $i$ we have an oddity $\{ O_i, \overline{O_i} \}$.
According to \Cref{oddities_defect}, it implies the defect of $\varphi(\mathbf{v})$ is infinite.

\pfk

The last proposition concludes the counterexample \ref{ex_rich_na_nekonecno}.

\begin{pozn}
It is clear that the defect of an image by a morphism of class $P_{ret}$ of a word with finite defect depends on the morphism.
As \Cref{ex_rich_na_nekonecno} shows, it depends also on the original word.
To underline this fact we can take the morphism $\varphi$ from \Cref{ex_rich_na_nekonecno} and $\mathbf{u}$ the Tribonacci word, i.e., the fixed point of the Tribonacci morphism
$0 \mapsto 01$, $1 \mapsto 02$ and $2 \mapsto 0$ - a well-known rich word~\cite{DrJuPi}.
It is easy to see that $D(\varphi(\mathbf{u})) = 0$.
\end{pozn}

\section{Comments}\label{comments}

At the end of the article \cite{BlBrGaLa}, the authors  state
several open questions, among them the following one:
``Let ${\mathbf  u}$ be a~fixed point
of a primitive morphism. If the defect is finite and non-zero, is the word ${\mathbf  u}$ necessarily periodic?''

We are not able to answer this question. The following
observation is just a~small comment to it.

\begin{observation} Let ${\mathbf  u}$ be a fixed point
of a primitive morphism and let its defect $ D({\mathbf u})$ be
finite. Then there exists a rich word ${\mathbf v}$ and a morphism
$\varphi \in P_{ret}$  such that ${\mathbf u}=\varphi({\mathbf v})$ and
${\mathbf v}$ itself is a fixed point of a primitive morphism as well.
\end{observation}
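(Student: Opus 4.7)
My plan is to apply Theorem~\ref{obraz} to obtain $\mathbf v$ and $\varphi\in P_{ret}$ with $\mathbf u=\varphi(\mathbf v)$, and then to construct a primitive morphism fixing $\mathbf v$ by desubstituting a suitable power of the primitive morphism $\psi$ that fixes $\mathbf u$. Recall from the proof of Theorem~\ref{obraz} that $p$ is a palindromic prefix of $\mathbf u$ of length greater than $K$, the letters of $\mathcal B$ correspond bijectively to the return words $q_b=\varphi(b)$ of $p$ in $\mathbf u$, and $\mathbf v$ is the label sequence of the unique factorization $\mathbf u=\varphi(v_0)\varphi(v_1)\varphi(v_2)\cdots$. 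Since $\psi$ is primitive and $p$ is a prefix of $\mathbf u$, $|\psi^k(u_0)|\ge|p|$ for all $k$ sufficiently large; I fix such a $k$, so $\psi^k(p)$ has $p$ as a prefix.

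I then define $\sigma\colon\mathcal B^*\to\mathcal B^*$ on letters by desubstituting $\psi^k(\varphi(b))$. For each $b$, the factor $\psi^k(\varphi(b)p)=\psi^k(\varphi(b))\psi^k(p)$ of $\mathbf u$ begins with $p$ (its initial segment $\psi^k(u_0)$ does) and also contains $p$ at position $|\psi^k(\varphi(b))|$. Every occurrence of $p$ in $\mathbf u$ is, by definition, a return-word boundary, so $\psi^k(\varphi(b))$ decomposes uniquely as a concatenation $\varphi(c_1)\varphi(c_2)\cdots\varphi(c_{\ell_b})$ of return words, and I set $\sigma(b):=c_1c_2\cdots c_{\ell_b}$. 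This construction yields $\varphi\circ\sigma=\psi^k\circ\varphi$ on $\mathcal B^*$, whence $\varphi(\sigma(\mathbf v))=\psi^k(\varphi(\mathbf v))=\psi^k(\mathbf u)=\mathbf u=\varphi(\mathbf v)$; comparing the two resulting return-word parses of this infinite word and using point~(1) of Remark~\ref{PropPret} yields $\sigma(\mathbf v)=\mathbf v$.

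Finally I would verify that $\sigma$ is primitive. For each $b\in\mathcal B$ we have $|\varphi(\sigma^j(b))|=|\psi^{kj}(\varphi(b))|\to\infty$ as $j\to\infty$ by primitivity of $\psi$. By uniform recurrence of $\mathbf u$ there is a constant $R$ such that every factor of $\mathbf u$ of length at least $R$ contains each $\varphi(c)p$, $c\in\mathcal B$, as a factor. Once $j$ is so large that $|\psi^{kj}(\varphi(b))|\ge 2R$, every $\varphi(c)p$ occurs with starting position strictly inside $\psi^{kj}(\varphi(b))$, and since $\varphi(c)p$ contains exactly two occurrences of $p$ this starting position must coincide with a return-word boundary in the decomposition of $\psi^{kj}(\varphi(b))$; hence $c$ appears in $\sigma^j(b)$. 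Taking the maximum of such $j$ over the finite set $\mathcal B\times\mathcal B$ supplies the primitivity constant for $\sigma$. The main obstacle I anticipate is the bookkeeping that legitimises the desubstitution, namely ruling out spurious occurrences of $p$ strictly inside $\psi^k(\varphi(b))$ that are not at return-word boundaries; this is automatic because every occurrence of $p$ in $\mathbf u$ is a return-word boundary by definition, and the same observation powers the primitivity argument.
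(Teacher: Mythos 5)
Your argument is correct, but it takes a genuinely different route from the paper: the paper's proof is essentially a citation, observing that the word $\mathbf v$ built in the proof of Theorem~\ref{obraz} is a \emph{derived word} in the sense of Durand~\cite{Du}, so that Lemma~19 of that paper immediately supplies a primitive morphism fixing $\mathbf v$. What you have written is in effect a self-contained reproof of the relevant case of Durand's lemma: you construct the desubstituted morphism $\sigma$ satisfying $\varphi\sigma=\psi^k\varphi$ explicitly and verify its primitivity by hand via the recurrence function. The key points all hold: every occurrence of the palindromic prefix $p$ in $\mathbf u$ is a cut point of the return-word parse, so $\psi^k(\varphi(b))$, lying between two such occurrences, decomposes uniquely into return words; and an occurrence of $\varphi(c)p$ inside $\psi^{kj}(\varphi(b))$ forces the letter $c$ into $\sigma^j(b)$ precisely because $\varphi(c)p$ contains exactly two occurrences of $p$. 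Two spots deserve slightly more care than you give them. First, deducing $\sigma(\mathbf v)=\mathbf v$ from $\varphi(\sigma(\mathbf v))=\varphi(\mathbf v)$ needs more than injectivity of $\varphi$ on $\mathcal B^*$ (a code injective on finite words need not be injective on infinite ones); the clean fix is to note that $\varphi(\sigma(v_0\cdots v_n))=\psi^k(\varphi(v_0\cdots v_n))$ is a prefix of $\mathbf u$ terminating at an occurrence of $p$, hence equals $\varphi(v_0\cdots v_M)$ for some $M$, and only then apply point~(1) of Remark~\ref{PropPret} to these finite words. Second, the choice of $k$ relies on $|\psi^k(u_0)|\to\infty$, which is valid for a primitive morphism on an alphabet with at least two letters (the one-letter case being trivial). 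The trade-off: the paper's citation is shorter and situates $\mathbf v$ within the established theory of derived words, while your construction is elementary and makes the fixing morphism $\sigma$ explicit, which would let one actually compute it in examples.
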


\pf The rich word ${\mathbf v}$, which we have
constructed in the proof of \Cref{obraz}, is a~derived
word, as introduced by Durand in
\cite{Du}. Lemma 19 of \cite{Du} says that any derived word
of a fixed point of a primitive morphism is a fixed point of a
primitive morphism as well.

\pfk

Theorem \ref{obraz} has the form
of implication, which cannot be reversed, since Example
\ref{ex_rich_na_nekonecno} demonstrates that a morphism from
$P_{ret}$ does not preserve automatically the set of words with
finite defect. It is thus natural to ask:
\begin{description}
\item[Question 1]  Is it possible to replace the class $P_{ret}$
with a smaller one in such a way that Theorem \ref{obraz} can be
stated in the form of equivalence?
\item[Question 2]  Which morphisms from $P_{ret}$ do preserve the set of rich
words?
\item[Question 3]  How to compute $D(\varphi({\mathbf u}))$
for a rich word ${\mathbf u}$ and a morphism from $\varphi \in P_{ret}$?
\item[Question 4]  Is it feasible to characterize morphisms $\varphi$ on $\mathcal{B}^*$ with the
property that $\varphi ({\mathbf u})$ has finite defect for any
infinite word ${\mathbf u} \in \mathcal{B}^\mathbb{N}$ with finite
defect?
\end{description}
Let us comment Question 1. The authors of~\cite{GlJuWiZa} define another class of morphisms that play an important role in the study
of finite defect. They call a~morphism $\varphi$ on ${\mathcal A}^*$ a~{\em standard morphism of class $P$} (or a~{\em standard $P$-morphism}) if there exists a~palindrome $r$ (possibly empty) such that, for all $x \in {\mathcal A}, \ \varphi(x) =
rq_x$, where the $q_x$ are palindromes. If $r$ is non-empty, then some (or all) of the palindromes
$q_x$ may be empty or may even take the form $q_x=\pi_x^{-1}$
with $\pi_x$ a~proper palindromic suffix of $r$.
They say that a~standard $P$-morphism is {\em special} if:
\begin{enumerate}
\item all $\varphi(x) = rq_x$ end with different letters, and
\item whenever $\varphi(x)r = rq_xr$, with $x \in {\mathcal A}$, occurs in some $\varphi(y_1y_2\dots y_n)r$,
then this occurrence is $\varphi(y_m)r$ for some $m$ with $1 \leq m \leq n$.
\end{enumerate}
They prove the following theorem.
\begin{thm}[Theorem 6.28~\cite{GlJuWiZa}]\label{special_defect}
If $\varphi$ is a~standard special $P$-morphism on ${\mathcal A}^*$ and $\mathbf u \in {\mathcal A}^*$,
then $D({\mathbf u})<\infty$ if and only if $D(\varphi({\mathbf u}))<\infty$.
\end{thm}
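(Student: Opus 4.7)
The plan is to work with the criterion from Corollary~\ref{kolikunioccurent}: for any infinite word $\mathbf{w}$, one has $D(\mathbf w)<\infty$ if and only if there exists a threshold $H$ such that $lps(f)$ is unioccurrent in $f$ for every prefix $f$ of $\mathbf w$ with $|f|\geq H$. Both directions of the theorem will then follow by transferring this unioccurrence property between prefixes of $\mathbf u$ and prefixes of $\varphi(\mathbf u)$, using the marker $r$ as a synchronizing device, in close analogy with the argument for the class $P_{ret}$ in Theorem~\ref{obraz}.

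The preparatory step is a lemma asserting that, for any finite word $w\in\mathcal A^*$, the word $\varphi(w)r$ is a palindrome if and only if $w$ is a palindrome. Writing $\varphi(w)r = rq_{w_0}r\cdots rq_{w_{k-1}}r$, its reversal equals $rq_{w_{k-1}}r\cdots rq_{w_0}r = \varphi(\overline w)r$, so one implication is immediate; the other follows from the injectivity of $\varphi$, itself a consequence of condition~(1) of standard special $P$-morphisms (the images end with distinct letters, so the last letter of $\varphi(w)$ determines the last letter of $w$ and we decode inductively). Alongside this I need a synchronization lemma: for any sufficiently long factor $z$ of $\varphi(\mathbf u)$, every occurrence of a word of the form $\varphi(x)r$ in $z$ lies at a $\varphi$-block boundary. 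This is precisely the content of condition~(2), and it forces any long palindromic factor of $\varphi(\mathbf u)$ that spans several blocks to have the shape $r\varphi(w)r$ for some palindromic factor $w$ of $\mathbf u$.

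For the implication $D(\mathbf u)<\infty \Rightarrow D(\varphi(\mathbf u))<\infty$, I would take $H$ from the criterion applied to $\mathbf u$ and pick $H'$ so large that every prefix $g$ of $\varphi(\mathbf u)$ with $|g|\geq H'$ decomposes as $\varphi(u_0\cdots u_{n-1})s$ with $n\geq H+2$ and $s$ a (possibly empty) proper prefix of a single $\varphi$-block. Handling the boundary piece $s$ separately, the synchronization lemma compels $lps(g)$ to expose a palindromic factor $w$ of the prefix $u_0\cdots u_{n-1}$ of $\mathbf u$; a second occurrence of $lps(g)$ in $g$ would then project down to a second occurrence of a long $\mathbf u$-palindrome around the corresponding position, contradicting the unioccurrence property at $H$.

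For the converse I would argue by contrapositive: if $D(\mathbf u)=\infty$, then infinitely many prefixes $f_i$ of $\mathbf u$ admit $lps(f_i)$ as a non-unioccurrent suffix. For each such $f_i$, the prefix $g_i=\varphi(f_i)r$ of $\varphi(\mathbf u)$ contains the palindrome $\varphi(lps(f_i))r$ (palindromic by the key lemma) at two distinct block-aligned positions; the synchronization lemma forces $lps(g_i)$ to be at least as long as $\varphi(lps(f_i))r$ and to be similarly non-unioccurrent in $g_i$. Since $|g_i|\to\infty$, this produces infinitely many bad prefixes of $\varphi(\mathbf u)$, so $D(\varphi(\mathbf u))=\infty$. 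The main obstacle throughout is the synchronization lemma itself: one must rule out palindromes in $\varphi(\mathbf u)$ that straddle $\varphi$-block boundaries in ways not arising from the palindromic structure of the underlying $\mathbf u$-factor, and this is precisely the purpose of condition~(2) of the definition of a standard special $P$-morphism.
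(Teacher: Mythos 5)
The paper itself offers no proof of this statement: it is imported verbatim as Theorem~6.28 of~\cite{GlJuWiZa}, so there is nothing internal to compare your argument with, and it has to be judged on its own. As an outline it assembles the right ingredients (the unioccurrence criterion from \Cref{kolikunioccurent}, the fact that $\varphi(w)r$ is a palindrome iff $w$ is, and synchronization via condition~(2) of the definition), but the two steps that carry all the weight are asserted rather than proved. The synchronization lemma in the form you need it --- that every sufficiently long \emph{palindromic} factor of $\varphi(\mathbf u)$ is block-aligned and of the shape induced by a palindromic factor of $\mathbf u$ --- does not follow directly from condition~(2), which only controls occurrences of the words $\varphi(x)r$. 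A long palindrome of $\varphi(\mathbf u)$ has no a priori reason to have its centre or its endpoints at block boundaries; one must first locate a factor $\varphi(x)r$ inside it, reflect it through the centre, apply condition~(2) to both occurrences, and only then deduce the alignment. You correctly identify this as ``the main obstacle'' but then simply use its conclusion; that is precisely the content of the theorem being hidden in an unproved lemma.

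The second gap is logical rather than technical, and it affects both directions: you repeatedly conflate ``$g$ has a non-unioccurrent palindromic suffix'' with ``$lps(g)$ is non-unioccurrent''. In the converse direction you exhibit the repeated palindromic suffix $\varphi(lps(f_i))r$ of $g_i=\varphi(f_i)r$ and conclude that $lps(g_i)$ is ``similarly non-unioccurrent''; but $lps(g_i)$ may be strictly longer than $\varphi(lps(f_i))r$ and perfectly unioccurrent (already $f=aba$ has the repeated palindromic suffix $a$ while $lps(f)=aba$ is unioccurrent). Symmetrically, in the forward direction projecting a second occurrence of $lps(g)$ down to $\mathbf u$ produces a repeated palindromic suffix of a prefix of $\mathbf u$, which by \Cref{kolikunioccurent} contributes to the defect only if it is the \emph{longest} such suffix. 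To close this you need either a genuine counting argument with $D(w)=|w|+1-\#\{\text{palindromes in }w\}$, or to route the transfer through non-palindromic complete return words of palindromes (oddities), as the paper does in the analogous $P_{ret}$ arguments of \Cref{obraz} and \Cref{ex_rich_na_nekonecno}. A further loose end in the forward direction: before the synchronization lemma can be applied to $lps(g)$ you must rule out that $lps(g)$ stays short (and hence repeated) for infinitely many long prefixes $g$, which again requires analysing the palindromic suffixes of $\varphi(u_0\cdots u_{n-1})s$ rather than assuming they span several blocks.
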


However, as shown in the following proposition, standard special $P$-morphisms are not the only ones that preserve
the set of uniformly recurrent words with finite defect, thus the class of standard special morphisms is too small as an answer to Question 1. Let us add that standard special morphisms of class $P$ do not form a~subset of morphisms of class $P_{ret}$. For instance,
$\varphi: a \to aabbaabba, \ b \to ab$ is a~standard special $P$-morphism with $r=a$, but does not belong to $P_{ret}$.

\begin{prop}\label{binary}
Let $\mathbf{u}$ be a~binary uniformly recurrent word such that $D(\mathbf{u})$ is finite.
Let $\varphi$ be a~morphism of class $P_{ret}$.
Then $D\left( \varphi(\mathbf{u}) \right)$ is finite.
\end{prop}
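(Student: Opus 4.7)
The plan is to verify the third item of \Cref{vsechno} for $\varphi(\mathbf u)$: I will show that there exists a constant $K'$ such that every complete return word in $\varphi(\mathbf u)$ of a palindrome of length $\geq K'$ is a palindrome. Let $K$ be the analogous constant for $\mathbf u$ (which exists by the hypothesis $D(\mathbf u)<\infty$ and \Cref{vsechno}), write $\mathcal B = \{a,b\}$, and put $A=\varphi(a)$, $B=\varphi(b)$.

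First, I would establish a synchronization lemma for morphisms of class $P_{ret}$: every occurrence of $p$ in $\varphi(\mathbf u)p$ lies at a ``$\varphi$-boundary'', i.e., at a position of the form $|\varphi(u_0u_1\cdots u_{i-1})|$. The reason is the second item of \Cref{def_Pret}: an occurrence of $p$ straddling two images or sitting strictly inside a block $\varphi(u_i)$ would yield a third occurrence of $p$ in $\varphi(u_i)p$, which is forbidden. As a consequence, every sufficiently long palindrome $\pi$ in $\varphi(\mathbf u)$ admits a decomposition $\pi = g\,\varphi(s)p\,h$, where $s$ is a palindrome of $\mathbf u$ of length at least $K$ and $g,h$ are ``overhangs'' of length bounded by $|A|+|B|+|p|$. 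Point (2) of \Cref{PropPret} combined with the palindromicity of $\pi$ forces a mirror relation between $g$ and $h$.

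Second, given a complete return word $R$ of such a palindrome $\pi$ in $\varphi(\mathbf u)$, I would use the synchronization to write $R = g\,\varphi(t)p\,h$ for some factor $t$ of $\mathbf u$ containing $s$ as a prefix and as a suffix. An internal occurrence of $s$ in $t$ would, via injectivity (point (1) of \Cref{PropPret}) and the synchronization, produce an internal occurrence of $\varphi(s)p$, and hence of $\pi$, in $R$; therefore $t$ is a complete return word of $s$ in $\mathbf u$. Since $|s|\geq K$, \Cref{vsechno} gives that $t$ is a palindrome, and then by point (3) of \Cref{PropPret} the core $\varphi(t)p$ is a palindrome as well.

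Third, it remains to conclude that $R = g\,\varphi(t)p\,h$ is itself a palindrome, which amounts to showing that $g$ and $h$ mirror each other correctly. This is where the binary assumption is essential and where the main obstacle lies. The overhangs $g$ and $h$ are determined by the letters of $\mathbf u$ immediately adjacent to the palindrome $t$, and in Example~\ref{ex_rich_na_nekonecno} the ternary alphabet allowed the asymmetric pattern $1\, v_i\, 0\, v_i\, 2\, v_i$ which broke the palindromicity of the complete return word; on $\mathcal B=\{a,b\}$ no such asymmetric three-letter alternation is available, and the palindromicity constraint on $\pi$ together with uniform recurrence pins down $g$ and $h$ uniquely from one another. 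I expect the cleanest route to formalize this step is either a direct case analysis of the four possibilities for the flanking letters of $t$ in $\mathbf u$, or a structural reduction showing that every $\varphi\in P_{ret}$ with binary domain is conjugate to a standard special $P$-morphism in the sense of~\cite{GlJuWiZa}; in the latter case \Cref{special_defect} would immediately yield the result. The technical heart is thus ruling out, in the binary setting, the type of ``palindromic envelope with non-palindromic interior'' that $P_{ret}$ morphisms can produce on larger alphabets.
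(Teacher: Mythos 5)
Your proposal does not close the argument: the step you yourself identify as ``the technical heart'' --- showing that the overhangs $g$ and $h$ mirror each other, which is precisely where the binary hypothesis must enter --- is left as a conjecture with two suggested routes, neither of which is carried out. The second route you float is in fact exactly what the paper does: Lemma~\ref{conjugate_to_special} shows that any $\varphi \in P_{ret}$ on $\{0,1\}^*$ is conjugate, via the longest common suffix $p_1$ of $\varphi(0)$ and $\varphi(1)$, to a standard special $P$-morphism; since conjugate morphisms produce images with the same language and hence the same defect, Theorem~\ref{special_defect} finishes the proof immediately. Until you supply either that conjugation argument or an actual analysis of the overhangs, the proof is incomplete --- and note that the latter is not merely a case analysis over ``the four possibilities for the flanking letters of $t$'': the overhangs are determined by where $\pi$ begins and ends inside the adjacent $\varphi$-blocks, so you would also need to show that the two occurrences of $\pi$ in $R$ sit identically relative to the block structure and that palindromicity of $\pi$ forces $g$ and $\overline{h}$ to agree as suffixes common to $\varphi(0)$ and $\varphi(1)$; done carefully, this amounts to reproving Lemma~\ref{conjugate_to_special} in disguise.

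There is also a flaw earlier, in your second step. From an internal occurrence of $s$ in $t$ you infer an internal occurrence of $\varphi(s)p$ in $R$ and ``hence of $\pi$''; but $\pi=g\,\varphi(s)p\,h$ with $g,h$ possibly nonempty, and an occurrence of the core $\varphi(s)p$ need not be flanked by $g$ and $h$, so it need not extend to an occurrence of $\pi$. Consequently $t$ need not be a complete return word of $s$: it is only a factor with $s$ as a prefix and a suffix, i.e., a concatenation of complete return words of $s$ overlapping in $s$, and a concatenation of palindromic complete return words need not be a palindrome (glue $aba$ and $aca$ to get $abaca$). So even granting Theorem~\ref{vsechno} for $\mathbf u$, the palindromicity of $\varphi(t)p$ does not follow as you state it. Both defects are repaired at once by the paper's route through conjugation to a standard special $P$-morphism and the cited Theorem~\ref{special_defect}.
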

\begin{lem}\label{conjugate_to_special}
Let $\varphi$ be a morphism of class $P_{ret}$ on $\{0,1\}^*$.
Then $\varphi$ is conjugate to a~standard special $P$-morphism.
\end{lem}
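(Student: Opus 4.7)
The plan is to invoke the preceding proposition, which already establishes that any $P_{ret}$-morphism is conjugate to one of class $P$, and then to verify that, in the binary setting, the resulting standard $P$-morphism additionally satisfies the two ``special'' conditions. A useful structural observation is that $\varphi$ is in fact itself already a standard $P$-morphism with $r = p$: since $\varphi(b)p$ is a palindrome with palindromic suffix $p$, the palindrome $p$ must also be its prefix, so $\varphi(b) = ps_b$, and palindromicity of $ps_bp$ forces $s_b$ to be a palindrome. I would work from this representation and check the two remaining ``special'' conditions, possibly after a further conjugation.

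For the alignment condition, I would argue by induction on $n$ that the occurrences of $p$ in $\varphi(y_1\cdots y_n)p$ are exactly the letter-boundary positions $|\varphi(y_1\cdots y_i)|$. Internal occurrences inside a single block $\varphi(y_i)p$ are ruled out by the ``exactly two occurrences'' clause of $P_{ret}$, and boundary-straddling occurrences reduce to the internal case because $\varphi(y_{i+1})$ begins with $p$, so the characters read across the boundary coincide with those in $\varphi(y_i)p$. Any occurrence of $\varphi(x)p$ has $p$ at both its endpoints, hence spans between two consecutive letter boundaries; injectivity of $\varphi$ then forces the spanning block to coincide with $\varphi(y_m)$ for a single $m$, with $y_m = x$, which is exactly the required alignment.

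The condition that $\varphi(0)$ and $\varphi(1)$ end in distinct letters is the genuinely binary-specific heart of the lemma and is the main obstacle. Since $\varphi(b) = ps_b$ with $s_b$ palindromic, $\varphi(b)$ ends with the common first-and-last letter of $s_b$, so the condition holds for $\varphi$ itself exactly when $s_0$ and $s_1$ start with different letters. If they do not, the plan is to perform a further conjugation, exploiting the fact that on a two-letter alphabet the common trailing letter can be rotated to produce a conjugate morphism whose images end in different letters while still fitting the standard $P$ template. The delicate part is to guarantee that such a conjugate exists inside the finite conjugacy class of $\varphi$ and remains a standard $P$-morphism throughout; this is precisely where the hypothesis $|\mathcal{B}| = 2$ is essential, since with more letters the analogous separation of terminal letters cannot in general be arranged by a single global conjugation.
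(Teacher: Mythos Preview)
Your observation that $\varphi$ itself is already a standard $P$-morphism with $r=p$ is correct, and your argument for the alignment condition (condition~2) is sound. The genuine gap is exactly where you flag it: you do not actually carry out the conjugation needed for condition~1 (distinct terminal letters), nor do you verify that after such a conjugation the result remains a standard $P$-morphism satisfying both special conditions. Calling the step ``delicate'' is an honest acknowledgement that the proof is unfinished; note in particular that your verification of condition~2 was done for $\varphi$, not for the conjugate, so after rotating you would owe that argument again.

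The paper's route is more direct and sidesteps the two-stage structure. It conjugates in one shot by $p_1$, the longest common \emph{suffix} of $\varphi(0)$ and $\varphi(1)$, setting $\sigma(b)=p_1\varphi(b)p_1^{-1}$. Since $\varphi(0)\neq\varphi(1)$, stripping their longest common suffix leaves two words ending in different letters; this is precisely the place where binarity is used, because with three or more images removing the global common suffix need not separate all terminal letters. The structural input is the identity $p_1=\overline{p_2}$, where $pp_2$ is the longest common prefix of the palindromes $\varphi(0)p$ and $\varphi(1)p$ (obtained by comparing longest common prefixes and suffixes of these palindromes). From this one reads off that $\sigma$ is a standard special $P$-morphism with palindrome $r=p_1 p\,\overline{p_1}$. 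Your iterated single-letter rotations would in fact terminate at the same $\sigma$, but the paper's formulation makes the verification of the standard-$P$ form and of both special conditions a single computation rather than an induction you still have to set up.
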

\begin{proof}
Let $p$ be the palindrome corresponding to $\varphi$ in the definition of $P_{ret}$.
Denote by $p_1$ the longest common suffix of $\varphi(0)$ and $\varphi(1)$.
Denote by $p_2$ a word such that $pp_2$ is the longest common prefix of $\varphi(0)p$ and $\varphi(1)p$.
Using properties of $P_{ret}$ we have $p_1 = \overline{p_2}$.
Define $\sigma(0)=p_1\varphi(0)p_1^{-1}$ and $\sigma(1)=p_1\varphi(1)p_1^{-1}$. Then $\varphi$ is conjugate to $\sigma$
and $\sigma$ is a~standard special $P$-morphism with the corresponding palindrome $r=p_1pp_1$.
\end{proof}
\begin{proof}[Proof of Proposition~\ref{binary}]
By Lemma~\ref{conjugate_to_special} the morphism $\varphi$ is conjugate to a~standard special $P$-morphism $\sigma$.
Clearly, the languages of $\varphi({\mathbf u})$ and $\sigma({\mathbf u})$ are the same, hence $D\left( \varphi(\mathbf{u}) \right)=D\left( \sigma(\mathbf{u}) \right)$.
Theorem~\ref{special_defect} implies that $D(\sigma({\mathbf u}))<\infty$.
\end{proof}

\section{Acknowledgements}
We acknowledge financial support by the Czech Science Foundation grant 201/09/0584, by
the grants MSM6840770039 and LC06002 of the Ministry of Education, Youth, and Sports of the Czech Republic,
and by the grant SGS10/085OHK4/1T/14 of the Grant Agency of the Czech Technical University in
Prague.

\end{document}